\theoremstyle{definition}
\newtheorem{defn}{Definition}[section]
\theoremstyle{plain}
\newtheorem{thm}[defn]{Theorem}
\newtheorem{lem}[defn]{Lemma}
\newtheorem{cor}[defn]{Corollary}
\newtheorem{prop}[defn]{Proposition}
\theoremstyle{remark}
\newtheorem{ex}{\upshape\bfseries Example}
\newtheoremstyle{case}{}{}{}{}{}{:}{ }{}
\theoremstyle{case}
\DeclareMathOperator{\ii}{\mathrm{i}}
\DeclareMathOperator{\N}{\mathbb{N}}
\DeclareMathOperator{\R}{\mathbb{R}}
\DeclareMathOperator{\Z}{\mathbb{Z}}
\DeclareMathOperator{\Q}{\mathbb{Q}}
\DeclareMathOperator{\D}{\mathbf{D}}
\DeclareMathOperator{\B}{\mathbf{B}}
\DeclareMathOperator{\C}{\mathcal{C}}
\DeclareMathOperator{\IMCG}{\mathrm{IMCG}}
\DeclareMathOperator{\PST}{\mathrm{PST}}
\DeclareMathOperator{\MST}{\mathrm{MST}}
\DeclareMathOperator{\UST}{\mathrm{UST}}
\begin{document}


\title{\textbf{State transfer on integral mixed circulant graphs}\footnote{Supported by the National Natural Science Foundation of China (Grant Nos. 11771141 and 12011530064).}}

\author{Xing-Kun Song\footnote{Email: \href{mailto:xksong@126.com}{xksong@126.com}}, Huiqiu Lin\footnote{Corresponding author. Email: \href{mailto:huiqiulin@126.com}{huiqiulin@126.com} }\\[2mm]
  \small School of Mathematics, East China University of Science and Technology, \\
  \small Shanghai 200237, P.R. China\\}
\date{}
\maketitle

\begin{abstract}
A mixed circulant graph is called integral if all eigenvalues of its Hermitian adjacency matrix are integers. The main purpose of this paper is to investigate the existence of perfect state transfer ($\PST$ for short) and multiple state transfer ($\MST$ for short) on integral mixed circulant graphs. Concretely, we provide sufficient and necessary conditions for the existence of $\PST$ and $\MST$ between specified pairs of vertices on integral mixed circulant graphs, respectively.
\end{abstract}

\begin{flushleft}
	\textbf{Keywords:} Mixed circulant graphs; integral graphs; perfect state transfer; multiple state transfer; Ramanujan's sum.
\end{flushleft}
\textbf{MSC2020:} 05C50; 81P45; 81P68

\section{Introduction}\label{sec::1}

A \textbf{mixed graph} $\Gamma=(V, E, A)$ consists of a set of vertices $V$, a set of undirected edges $E$, and a set of directed edges (or arcs) $A$. In particular,  $\Gamma$ is \textbf{undirected} (resp. \textbf{oriented}) if it contains only undirected (resp. directed) edges. The \textbf{Hermitian adjacency matrix} of $\Gamma$, introduced by Liu and Li~\cite{LL15} and independently by Guo and Mohar~\cite{GM17}, is defined as $H_{\Gamma}=(h_{uv})_{u,v\in V}$, where
\begin{equation*}
	h_{uv} =
	\begin{cases}
		1,    & \text{ if }	\{u,v\}\in E, \\
		\ii,  & \text{ if } (u,v)\in A, \\
		-\ii, & \text{ if } (v,u)\in A, \\
		0,    & \text{ otherwise}.
	\end{cases}
\end{equation*}
Here $\ii=\sqrt{-1}$. Clearly, $H_\Gamma$ is a Hermitian matrix. In particular, if $\Gamma$ is undirected (resp. oriented), then $H_\Gamma=A_\Gamma$ (resp. $H_\Gamma=\ii S_\Gamma$), where $A_\Gamma$ (resp. $S_\Gamma$) is the \textbf{adjacency matrix} (resp. \textbf{skew adjacency matrix}) of $\Gamma$. The eigevalues of $H_\Gamma$ are also called the \textbf{eigenvalues} of $\Gamma$. A mixed graph $\Gamma$ is called  \textbf{integral} if all its eigenvalues are integers.

Let $\Gamma=(V, E, A)$ be a mixed graph. For any vertex $v\in V$, let $\mathbf{e}_v$ be the vector defined on $V$ such that $\mathbf{e}_v(u)=1$ if $u=v$ and $\mathbf{e}_v(u)=0$ otherwise. We say that $\Gamma$ has \textbf{perfect state transfer} ($\PST$ for short) from vertex $u$ to vertex $v$ if there exists a time $t \in \R$ and a complex unimodular scalar $\gamma$ such that
\begin{equation*}
	U(t)\mathbf{e}_u= \gamma \mathbf{e}_v,
\end{equation*}
where $U(t) = \exp(\ii t H_\Gamma)$ is the \textbf{transition matrix} of $H_\Gamma$.
In particular, we say that $\Gamma$ is \textbf{periodic at vertex $u$} if it has perfect state transfer from $u$ to $u$. Furthermore, we say that $\Gamma$ is \textbf{periodic} if $U(t)$ is a scalar multiple of the identity matrix. For any subset $C\subseteq V$, we say that $\Gamma$ has \textbf{multiple state transfer} ($\MST$ for short) on $C$ if $\PST$ occurs between each pair of vertices in $C$.

In \cite{Bo03}, Bose first proposed the concept of $\PST$ for undirected graphs.
In \cite{Go12a}, Godsil proved that for any integer $k$ there are only finitely many (undirected) graphs with maximum degree $k$ on which $\PST$ occurs. Additionally, Godsil \cite{Go08,Go12a,Go12b,CG21b} provided some characterizations for graphs admitting $\PST$. In particular, it was found that  Cayley graphs, especially integral
circulant graphs, play an important role in modeling quantum spin networks supporting the $\PST$ \cite{BP09,BPS09,PB11,SSS07}. In 2010, Angeles-Canul, Norton, Opperman, Paribello, Russell, and Tamon \cite{ANOPRT10} asked for a complete characterization of integral circulant graphs admitting $\PST$. Based on So's characterization~\cite{So06} for integral circulant graphs,  Ba\v{s}i\'{c}~\cite{Ba13} gave an answer to this question. Very recently,  \'{A}rnad\'{o}ttir and Godsil~\cite{AG22} characterized $\PST$ on Cayley graphs for abelian groups that have a cyclic Sylow-$2$-subgroup, which generalizes the main result of Ba\v{s}i\'{c}~\cite{Ba13}. For more results about $\PST$ on (undirected) Cayley graphs, we refer the reader to \cite{CCL20,CFT21,CF21,TFC19,LZ18}, and references therein. With regard to oriented graphs, Godsil and Lato~\cite{GL20}  provided some basic properties for oriented graphs admitting $\PST$, and proposed the concept of $\MST$ for oriented graphs. Song~\cite{S22} gave  necessary and sufficient conditions for the existence of $\PST$ and $\MST$ on integral oriented circulant graphs, respectively.

In this paper, we will consider $\PST$ and $\MST$ for integral mixed circulant graphs. Let $\mathbb{Z}_n$ be the additive group of integers module $n$. Let $\C$ be a subset of $\mathbb{Z}_n\setminus\{0\}$, and let $\overline{\C}=\{c\in \C\mid -c\notin\C\}$.  The \textbf{mixed circulant graph} $G(\mathbb{Z}_n,\C)$ is defined as the mixed graph with vertex set $\mathbb{Z}_n$, edge set $\{\{a,b\}\mid a,b\in \mathbb{Z}_n, b-a\in\C\setminus \overline{\C}\}$ and arc set $\{(a,b)\mid a,b\in \mathbb{Z}_n,b-a\in \overline{\C}\}$. In 2021, Kadyan and Bhattacharjya~\cite{MB21a} characterized all integral mixed circulant graphs.
\begin{thm}{\upshape\bfseries (\cite[Theorem 5.4]{MB21a})}\label{thm::1.1}
Let $G_n(d)=\{k:1\leq k\leq n-1,\gcd(k,n)=d\}$, and let $G_n^r(d)=\{k\in G_n(d): k\equiv r \pmod 4\}$ for $r\in \{1,3\}$. Then the mixed circulant graph $G(\mathbb{Z}_n,\C)$ is integral if and only if
	\begin{equation*}
	\C \setminus \overline{\C} = \bigcup_{d\in \B}G_n(d)
		\text{~~and~~}
		\overline{\C}=
		\begin{cases}
			\emptyset,               & \text{  if } n\not\equiv 0\pmod 4, \\
			\bigcup_{d\in \D}S_n(d), & \text{  if } n\equiv 0\pmod 4,
		\end{cases}
	\end{equation*}
	where $\B \subseteq \{ d: d\mid n,  1\leq d< n\}$, $\D \subseteq \{ d: d\mid n/4, 1\leq d\leq n/4\}$, $\B \cap \D = \emptyset$, and $S_n(d) \in\{ G_n^{1}(d),G_n^{3}(d)\}$.
\end{thm}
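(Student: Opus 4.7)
The plan is to diagonalize the Hermitian adjacency matrix $H_\Gamma$ of $\Gamma=G(\Z_n,\C)$ by the characters of $\Z_n$ and then split the eigenvalue into an undirected and an oriented piece. With $\omega=e^{2\pi\ii/n}$ and $j\in\Z_n$, the vector $(\omega^{ja})_{a\in\Z_n}$ is an eigenvector of $H_\Gamma$, and a direct calculation gives
\[
\lambda_j \;=\; U_j+O_j,\qquad U_j:=\sum_{c\in\C\setminus\overline{\C}}\omega^{jc},\qquad O_j:=\ii\sum_{c\in\overline{\C}}\bigl(\omega^{jc}-\omega^{-jc}\bigr).
\]
Both pieces are real: $U_j$ because $\C\setminus\overline{\C}$ is closed under negation, and $O_j=-2\sum_{c\in\overline{\C}}\sin(2\pi jc/n)$ by inspection.

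The structural observation is that $U_{-j}=U_j$ while $O_{-j}=-O_j$, so $U_j=\tfrac12(\lambda_j+\lambda_{-j})$ and $O_j=\tfrac12(\lambda_j-\lambda_{-j})$. If every $\lambda_j\in\Z$, then every $U_j,O_j\in\tfrac12\Z$; being additionally algebraic integers (obtained as sums of products of $\ii$ with roots of unity) they must lie in $\Q\cap\overline{\Z}=\Z$. Thus $\Gamma$ is integral iff $U_j\in\Z$ for every $j$ \emph{and} $O_j\in\Z$ for every $j$, and the two conditions are independent. The first is exactly So's criterion \cite{So06} for the integrality of the undirected circulant $G(\Z_n,\C\setminus\overline{\C})$, yielding $\C\setminus\overline{\C}=\bigcup_{d\in\B}G_n(d)$ for some $\B\subseteq\{d:d\mid n,\,1\leq d<n\}$.

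The main obstacle is identifying the admissible $\overline{\C}$, for which I would refine So's Galois-theoretic argument to accommodate the sign asymmetry of $O_j$. Using the action of $(\Z/n)^{\times}\cong\mathrm{Gal}(\Q(\omega)/\Q)$, integrality of $O_j$ for every $j$ forces each intersection $\overline{\C}\cap G_n(d)$ to be a union of orbits under a suitable index-$2$ subgroup of $(\Z/n)^{\times}$ that respects the sign structure introduced by $\ii$. A full class $G_n(d)$ cannot appear, because $G_n(d)=-G_n(d)$ would both force $O_j\equiv 0$ and violate $\overline{\C}\cap(-\overline{\C})=\emptyset$; the admissible proper sub-orbits turn out to be exactly the residue-mod-$4$ halves $G_n^1(d)$ and $G_n^3(d)$. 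A case split on $n\bmod 4$ then finishes the argument: when $n\not\equiv 0\pmod 4$ no proper sub-orbit of the refined action is simultaneously compatible with disjointness and integrality, forcing $\overline{\C}=\emptyset$; when $n\equiv 0\pmod 4$ and $d\mid n/4$, negation interchanges $G_n^1(d)$ and $G_n^3(d)$, and a direct computation via the Chinese-remainder decomposition $\Z_{n/d}\cong\Z_4\times\Z_{n/(4d)}$ evaluates $-2\sum_{c\in G_n^r(d)}\sin(2\pi jc/n)$ as an integer-valued Ramanujan-type quantity, whereas the parallel computation for $d\nmid n/4$ produces irrational values. Assembling these pieces gives $\overline{\C}=\bigcup_{d\in\D}S_n(d)$ with $S_n(d)\in\{G_n^1(d),G_n^3(d)\}$, and $\B\cap\D=\emptyset$ follows from the partition $\C=(\C\setminus\overline{\C})\sqcup\overline{\C}$.
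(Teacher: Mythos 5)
The paper does not prove this theorem; it is quoted from Kadyan--Bhattacharjya \cite{MB21a}, so there is no internal proof to compare against. Your opening reduction is correct and matches the standard approach: diagonalizing by characters, splitting $\lambda_j=U_j+O_j$ with $U_{-j}=U_j$, $O_{-j}=-O_j$, and concluding $U_j,O_j\in\Z$ from the half-integer plus algebraic-integer argument cleanly decouples the problem into So's criterion for $\C\setminus\overline{\C}$ and a separate integrality condition for the oriented part $\overline{\C}$.

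However, the second half, which is the actual substance of the theorem, has genuine gaps. The claim that integrality of all $O_j$ forces $\overline{\C}\cap G_n(d)$ to be a union of orbits of ``a suitable index-$2$ subgroup'' whose admissible sub-orbits ``turn out to be exactly'' $G_n^1(d)$ and $G_n^3(d)$, and only for $d\mid n/4$, is asserted rather than derived; this is precisely what needs proving. The correct framework is to work in $\Q(\omega_{\mathrm{lcm}(4,n)})$, where $\sigma_k$ fixes $\ii$ iff $k\equiv 1\pmod 4$, so $O_j\in\Q$ for all $j$ is equivalent to $k\overline{\C}=\overline{\C}$ for units $k\equiv 1\pmod 4$ and $k\overline{\C}=-\overline{\C}$ for units $k\equiv 3\pmod 4$; when $4\nmid n$ one chooses $k\equiv 1\pmod n$ with $k\equiv 3\pmod 4$ to force $\overline{\C}=-\overline{\C}$, hence $\overline{\C}=\emptyset$, and when $4\mid n$ the orbit analysis must also rule out classes $G_n(d)$ with $\vartheta_2(n/d)\leq 1$, which is where the condition $d\mid n/4$ comes from; none of this is in your sketch. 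Moreover, your proposed verification step uses the decomposition $\Z_{n/d}\cong\Z_4\times\Z_{n/(4d)}$, which is false whenever $8\mid n/d$ (the two factors are not coprime), so the sufficiency computation does not go through as written; the correct tool is the evaluation of the Ramanujan sine sum $s_n(q)$ in terms of $c_m(q')$ (Lemma \ref{lem::2.5} of this paper, from \cite{S22}), or the argument of \cite{MB21a} itself. So: right skeleton and a correct reduction, but the characterization of the admissible $\overline{\C}$ is not established.
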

Suppose that $G(\mathbb{Z}_n,\C)$ is an integral mixed circulant graph. Let $\B \subseteq \{ d: d\mid n,  1\leq d< n\}$ and $\D \subseteq \{ d: d\mid n/4, 1\leq d\leq n/4\}$ such that $\B \cap \D = \emptyset$. Define $\sigma: \D\rightarrow\{1,-1\}$ be a mapping obtained by $\sigma(d)=1$ if $S_n(d)=G_n^1(d)$, and  $\sigma(d)=-1$ if $S_n(d)=G_n^3(d)$. According to~\Cref{thm::1.1}, if $n\not\equiv 0\pmod 4$, then $\D=\emptyset$ and $G(\mathbb{Z}_n,\C)$ is determined by $\B$, and if $n\equiv 0\pmod 4$, then $G(\mathbb{Z}_n,\C)$ is determined by $\B$, $\D$, and the mapping $\sigma$. For this reason, in what follows, we always use $\IMCG_n(\B,\D,\sigma)$ to denote the integral mixed circulant graph $G(\mathbb{Z}_n,\C)$ with $\C=(\bigcup_{d\in \B}G_n(d))\cup (\bigcup_{d\in \D}G_n^{\sigma(d)}(d))$. Note that here the mapping $\sigma$ exists only when $\D\neq \emptyset$. For example, integral mixed circulant graph $\IMCG_8(\{4\},\{1,2\},\{1,-1\})$ is shown in Fig.~\ref{FIG}. If $G(\mathbb{Z}_n,\C)=\IMCG_8(\{4\},\{1,2\},\{1,-1\})$, then $\C=G_8(4)\cup G_8^{1}(1)\cup G_8^{3}(2)=\{1,4,5,6\}$ with $\overline{\C}=\{1,5,6\}$.

\begin{figure}[htbp!]
  \centering
\begin{tikzpicture}
\foreach \x in{0,1,...,7}
{
\node[circle,fill=gray!25,draw,inner sep=0pt,minimum size=4mm] (a\x) at (-\x*45:2){\x};
\draw[blue] (0,0) -- (a\x);
}
\foreach \x in {0,1,...,7}
{
\pgfmathparse{int(mod(\x+1,8))}
\draw[-stealth,red] (a\x) -- (a\pgfmathresult);
}
\foreach \x in {0,1,...,7}
{
\foreach[parse] \y in{\x+5,\x+6}
{
\pgfmathparse{int(mod(\y,8))}
\draw[-stealth,red] (a\x) -- (a\pgfmathresult);
}
}
\end{tikzpicture}
  \caption{Integral mixed circulant graph $\IMCG_8(\{4\},\{1,2\},\{1,-1\})$ \label{FIG}}
\end{figure}
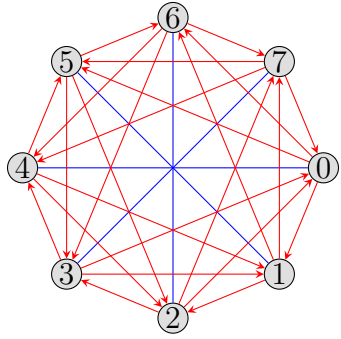

For a positive integer $n$, let $\vartheta_2(n)$ be the largest positive integer $\alpha$ such that $2^{\alpha} \mid n$. For $\B$ and $\D$, we classify and denote by $\B=\bigcup_{i=0}^{\vartheta_2(n)}\B_i$ and $\D=\bigcup_{i=2}^{\vartheta_2(n)}\D_i$, where $\B_i = \{d\in \B \mid \vartheta_2(n/d)=i\}$ and $\D_i = \{d\in \D \mid \vartheta_2(n/d)=i\}$.  The main results of this paper are as follows.

\begin{thm}\label{thm::1.2}
	Let $\Gamma=\IMCG_n(\B,\D,\sigma)$ be an integral mixed circulant graph. Then for all $b \in \Z_n$,  $\Gamma$ has $\PST$ between vertices $b+n/2$ and $b$ if and only if $\B_0=2\B^*_1=4\B^*_2$, and one of the following statements holds:
	\begin{enumerate}[label = \bf(\roman*)]
		\item $n\in 4\N$, $\D_2=\{n/4\}$ and $n/2\notin \B$,\label{thm::1.2::2}
		\item $n\in 4\N$, $\D_2=\emptyset$, either $n/2\in \B$ or $n/4\in \B$, \label{thm::1.2::3}
		\item $n\in 8\N$, $\D_2=\emptyset$, $\{n/4,n/2\}\subseteq \B$, $\D_3=\{n/8\}$, $\B^*_2=2\B^*_3$, \label{thm::1.2::4}
	\end{enumerate}
	where $\B^*_i=\B_{i} \setminus \{n/2^i\}$ for $i\in\{1,2,3\}$.
\end{thm}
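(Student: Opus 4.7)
The plan is to exploit the Cayley-graph structure of $\Gamma=\IMCG_n(\B,\D,\sigma)$ and the fact that vertex-transitivity reduces the claim to the special case $b=0$: perfect state transfer between $b$ and $b+n/2$ occurs for every $b\in\Z_n$ if and only if it occurs between $0$ and $n/2$. I would first diagonalize $H_\Gamma$ using the characters of $\Z_n$: the eigenvectors are $v_j=(\omega^{ja})_{a\in\Z_n}$ with $\omega=e^{2\pi \ii/n}$, and the eigenvalues take the form
\begin{equation*}
  \lambda_j \;=\; \sum_{d\in\B} c_{n/d}(j) \;+\; \ii\sum_{d\in\D}\Big(\sum_{c\in G_n^{\sigma(d)}(d)} \omega^{jc}-\omega^{-jc}\Big),
\end{equation*}
where $c_{n/d}(j)$ is Ramanujan's sum. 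Using the explicit evaluation recalled in the proof of \Cref{thm::1.1} and the fact that $S_n(d)$ is supported in a fixed residue class modulo~$4$, each $\lambda_j$ reduces to an integer combination whose $2$-adic behavior is controlled by the parameter $i=\vartheta_2(n/d)$ attached to each divisor $d$.

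Next I would rewrite the $\PST$ condition spectrally. Since $\mathbf{e}_0=\tfrac{1}{n}\sum_j v_j$ and $\mathbf{e}_{n/2}=\tfrac{1}{n}\sum_j(-1)^j v_j$, the equation $U(t)\mathbf{e}_0=\gamma\mathbf{e}_{n/2}$ is equivalent to $e^{\ii t\lambda_j}=\gamma(-1)^j$ for every $j\in\Z_n$, i.e., $e^{\ii t(\lambda_j-\lambda_0)}=(-1)^j$. Choosing $t=\pi/g$ with $g$ the largest real number that makes all $(\lambda_j-\lambda_0)/g$ integral, $\PST$ then amounts to the parity statement that $(\lambda_j-\lambda_0)/g$ is even for even $j$ and odd for odd $j$. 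Hence necessity and sufficiency both reduce to comparing $\vartheta_2(\lambda_j-\lambda_0)$ across the four residue classes $j\bmod 4$ (or $j\bmod 2$ when $n\not\equiv 0\pmod 4$).

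The third step is the translation into combinatorial conditions on $\B$ and $\D$. Grouping divisors by $\B_i$ and $\D_i$, the contribution of each bucket to $\lambda_j-\lambda_0$ has a uniform $2$-adic valuation governed by $i$ and by the parity of $j$; in particular Ramanujan sums satisfy $c_{n/d}(j)=c_{n/d}(0)$ for $j$ with $\vartheta_2(j)\geq i$ and pick up a controlled sign otherwise, and the oriented terms attached to $\D_i$ flip sign when $j$ is replaced by $j+2^{i-1}$. Matching these bucketwise against the parity requirement produces, on the one hand, the structural identity $\B_0=2\B^*_1=4\B^*_2$ (which balances the contributions at the highest levels of the $2$-adic filtration), and on the other hand the three mutually exclusive cases \ref{thm::1.2::2}--\ref{thm::1.2::4}, according to which level contains the ``odd-parity'' contribution needed to flip $(-1)^j$.

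The hard part will be the careful bookkeeping of the oriented contributions from $\D$, since these depend not just on $\vartheta_2(n/d)$ but also on $\sigma(d)$ and on whether $j\equiv 1$ or $3\pmod 4$; this is what forces a split between the case $\D_2=\{n/4\}$ (where the oriented term single-handedly produces the required sign) and the case $\D_2=\emptyset$ with $n/4$ or $n/2$ absorbed into $\B$. Case \ref{thm::1.2::4}, where $n\in 8\N$ and both $\D_3=\{n/8\}$ and $\B^*_2=2\B^*_3$ are required, is the subtlest: here the odd-parity flip must be delivered at the $i=3$ level, so one must verify that the previously frozen levels $i=0,1,2$ do not accidentally reintroduce the wrong parity. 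Once the eigenvalue computation and $2$-adic lemmas are in place, both directions of the theorem follow from the same case analysis, sufficiency being established by the explicit choice $t=\pi/g$ and checking the parity of $(\lambda_j-\lambda_0)/g$ bucket by bucket.
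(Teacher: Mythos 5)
Your overall framework is the same as the paper's (character diagonalization, eigenvalues via Ramanujan sums, reduction of $\PST$ between $0$ and $n/2$ to a parity/$2$-adic condition on the integer eigenvalue differences; your criterion with $t=\pi/g$ is equivalent to the paper's \Cref{lem::3.2}). The genuine gap is in the third step, which is where all the content of \Cref{thm::1.2} actually lives, and your one concrete technical claim there is false: it is not true that $c_{n/d}(j)=c_{n/d}(0)$ whenever $\vartheta_2(j)\geq i$. Writing $n/d=2^i m$ with $m$ odd, one has $c_{n/d}(j)=c_{2^i}(j)\,c_m(j')$ where $j'$ is the odd part of $j$, so the value depends on $j'$ through $c_m(j')$ and is generally not $\varphi(n/d)$ (e.g.\ $c_{12}(4)=-2\neq 4=c_{12}(0)$); likewise the oriented terms do not simply ``flip sign'' under $j\mapsto j+2^{i-1}$, because the factor $c_m(q')$ changes as well (\Cref{lem::2.4}, \Cref{lem::2.5}). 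The uniform bucketwise behavior you invoke holds only for the divisors $d=n/2^i$ with trivial odd part --- which is precisely what the theorem's conditions ($\D_2=\{n/4\}$, $\D_3=\{n/8\}$, and the identities among the $\B_i$) are asserting --- so assuming it in the necessity direction is circular.

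Concretely, what is missing is the mechanism that forces those conditions. In the paper this takes three nontrivial ingredients that your sketch does not supply: a parity lemma for sums of Ramanujan sums at odd arguments (\Cref{lem::3.3}, which yields $\B_0=2\B^*_1$ from the constancy of the parity of $\gamma_j$ for odd $j$, \Cref{lem::3.7}); a reduction to the half-order circulant graph $\IMCG_{n/2}(\B',\D',\sigma)$ whose odd-indexed eigenvalues reproduce the $j\in 4\N+2$ eigenvalues of $\Gamma$, which is how one pushes the identity one level down to get $\B^*_1=2\B^*_2$ and $\D_2\setminus\{n/4\}=\emptyset$ (\Cref{lem::3.11}); and the explicit computation of $\gamma_{j+1}-\gamma_j$ in each residue class mod $4$ under those conditions (\Cref{lem::3.13}), from which the split into cases \ref{thm::1.2::2}--\ref{thm::1.2::4} (including the subtle requirements $\D_3=\{n/8\}$ and $\B^*_2=2\B^*_3$ in case \ref{thm::1.2::4}, again via \Cref{lem::3.3} and \Cref{lem::3.8}) is read off. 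Your proposal asserts that ``matching bucketwise produces'' these identities, but no argument is given, and with the incorrect uniformity claim the matching as described would not go through; so as written the proof is incomplete precisely at the necessity direction and at the fine case analysis for sufficiency in case \ref{thm::1.2::4}.
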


\begin{thm}\label{thm::1.3}
	Let $\Gamma=\IMCG_n(\B,\D,\sigma)$ be an integral mixed circulant graph. Then for all $b \in \Z_n$,  $\Gamma$ has $\MST$ between vertices $b$, $b+n/4$, $b+n/2$, $b+3n/4$ if and only if $n\in 8\N$, $\B_0=2\B^*_1=4\B^*_2=8\B^*_3$, $\D_2=\{n/4\}$, $\D_3=\{n/8\}$, $n/2\notin \B$, where $\B^*_i=\B_{i} \setminus \{n/2^i\}$ for $i\in\{1,2,3\}$.
\end{thm}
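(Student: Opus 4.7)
The plan is to reduce $\MST$ on $\{b, b+n/4, b+n/2, b+3n/4\}$ to a pair of simultaneous $\PST$ conditions, and combine \Cref{thm::1.2} with an analogous characterization at shift $n/4$.

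Since $\IMCG_n(\B,\D,\sigma)$ is a Cayley graph of $\Z_n$, each translation $v\mapsto v+c$ commutes with $H_\Gamma$, so for any fixed shift $s\in \Z_n$, $\PST$ between $u$ and $u+s$ either holds for every $u$ or for none. Moreover, $\PST$ is symmetric in source and target (if $U(t)\mathbf{e}_u=\gamma \mathbf{e}_v$ then $U(-t)\mathbf{e}_v=\bar\gamma\mathbf{e}_u$), so the existence of $\PST$ at shift $s$ is equivalent to the existence of $\PST$ at shift $-s$. The pairwise differences among $b,b+n/4,b+n/2,b+3n/4$ lie in $\{n/4,n/2,3n/4\}$ with $3n/4\equiv -n/4\pmod n$. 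Hence $\MST$ on this set is equivalent to the conjunction of (a) $\Gamma$ has $\PST$ between $b$ and $b+n/2$, and (b) $\Gamma$ has $\PST$ between $b$ and $b+n/4$.

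Condition (a) is characterized precisely by \Cref{thm::1.2}. The core work is a parallel analysis of (b), paralleling the proof of \Cref{thm::1.2} but one level deeper in the $2$-adic hierarchy. Writing eigenvalues of $H_\Gamma$ as Ramanujan sums indexed by $\B$ plus a signed Hermitian variant indexed by $\D$ (with signs $\sigma(d)$), classifying summands by $\vartheta_2(n/d)$, and asking when the phases $\exp(\ii t\lambda_j)$ align with the characters $\chi_j(n/4)=\ii^{j}$ up to a global unimodular scalar, I expect (b) to force $n\in 8\N$, the extended ladder $\B_0=2\B^*_1=4\B^*_2=8\B^*_3$, the specific arc data $\D_2=\{n/4\}$ and $\D_3=\{n/8\}$, and $n/2\notin \B$. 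Intersecting with \Cref{thm::1.2}, cases (ii) and (iii) are then ruled out because they place $n/4$ or $n/2$ in $\B$; the resulting eigenvalue contributions are compatible with the two-valued $\chi_j(n/2)=(-1)^j$ but cannot be phase-matched by the four-valued $\chi_j(n/4)$ at any single time $t$. Only case (i) survives, giving $\D_2=\{n/4\}$ and $n/2\notin \B$, which together with the conditions from (b) yields exactly the statement of \Cref{thm::1.3}. Conversely, under the stated hypotheses an explicit common time $t$ realizes both (a) and (b), so $\MST$ holds.

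The principal obstacle is the refined $2$-adic spectral analysis underlying (b). The shift $n/4$ is more delicate than $n/2$: the four-valued characters $\chi_j(n/4)\in\{1,\ii,-1,-\ii\}$ require eigenvalue congruences modulo $2^{\vartheta_2(n)+2}$ instead of modulo $2^{\vartheta_2(n)+1}$, and one must coordinate the symmetric contributions from $\B$ with the antisymmetric contributions from $\D$ weighted by $\sigma$. Once this key lemma is in hand, the case-by-case elimination and the sufficiency verification are routine.
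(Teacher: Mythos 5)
Your high-level architecture is the same as the paper's: reduce $\MST$ on $\{b,b+n/4,b+n/2,b+3n/4\}$ to the conjunction of $\PST$ at shift $n/2$ and $\PST$ at shift $n/4$ (this is exactly \Cref{thm::4.2}, and your translation/symmetry justification of it is fine), invoke \Cref{thm::1.2} for the shift $n/2$, and then do a finer $2$-adic analysis for the shift $n/4$. The problem is that everything after that point is deferred: you write that you ``expect'' the shift-$n/4$ condition to force $n\in 8\N$, $\B_0=2\B^*_1=4\B^*_2=8\B^*_3$, $\D_2=\{n/4\}$, $\D_3=\{n/8\}$ and $n/2\notin\B$, and you call the remaining work ``routine'' once ``this key lemma is in hand.'' That key lemma \emph{is} the proof. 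Concretely, what is missing is: (1) a usable eigenvalue criterion for $\PST$ at shift $n/4$ — in the paper this is \Cref{thm::4.4}/\Cref{thm::4.3}, namely that (given $\PST$ at $n/2$) one needs $\vartheta_2(\gamma_{j+2}-\gamma_j)$ to be the same for all $j$, which is a statement about $2$-adic valuations of second differences, not the ``congruences modulo $2^{\vartheta_2(n)+2}$'' you gesture at; (2) the explicit computation of $\gamma_{j+2}-\gamma_j$ under the necessary conditions of \Cref{thm::1.2} (the paper's \Cref{lem::3.15}, resting on \Cref{lem::3.14} and \Cref{cor::2.7}); and (3) the parity arguments (\Cref{lem::3.3} and \Cref{lem::3.8}) that convert the constancy of $\vartheta_2(\gamma_{j+2}-\gamma_j)$ into the set-theoretic conditions $\D_3=\{n/8\}$ and $\B^*_2=2\B^*_3$.

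Your case elimination is also not the actual mechanism. The reason cases \ref{thm::1.2::3} and \ref{thm::1.2::4} of \Cref{thm::1.2} die is elementary once one has \Cref{lem::3.15}: for odd $j$ the second difference is $\gamma_{j+2}-\gamma_j=\pm 4\,\mathds{1}_{\D}(n/4)\sigma(n/4)$, so if $n/4\notin\D$ these differences vanish identically and, by \Cref{cor::4.2}, $\PST$ at shift $n/4$ is impossible; hence $\D_2=\{n/4\}$ is forced, which puts you in case \ref{thm::1.2::2} and gives $n/2\notin\B$ (and $n/4\notin\B$ since $\B\cap\D=\emptyset$). Your stated reason — that the contributions ``cannot be phase-matched by the four-valued $\chi_j(n/4)$ at any single time $t$'' — is an unproved assertion, and as written it is not even clear what inequality it would reduce to. Likewise the converse direction needs the explicit verification that under the stated hypotheses $\Lambda_2(j+2)-\Delta(j)$ is odd on $j\in 4\N$ (and similarly on $4\N+2$), so that $\vartheta_2(\gamma_{j+2}-\gamma_j)=2$ for all $j$; ``an explicit common time $t$ realizes both'' is again only a promise. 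In short, the skeleton matches the paper, but the quantitative spectral lemma on which the whole theorem rests is neither stated precisely nor proved, so the proposal as it stands has a genuine gap.
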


The paper is organized as follows. In~\Cref{sec::2}, we provide an expression for the eigenvalues of integral mixed circulant graphs by using Ramanujan's sum and Ramanujan’s sine sum. In~\Cref{sec::3}, we give the proof of \Cref{thm::1.2}. In~\Cref{sec::4}, we give the proof of \Cref{thm::1.3}.

\section{The eigenvalues of integral mixed circulant graphs}\label{sec::2}

In this section, we shall express the eigenvalues of integral mixed circulant graphs in terms of Ramanujan's sum, which is crucial in the characterization of PST and MST on such graphs.

Let $n$ be a  positive integer, and let  $\omega_n=\exp(2\pi\ii/n)$ be the primitive $n$-th root of unity. Recall that, for any  positive divisor $d$ of $n$,  $G_n(d)=\{1\leq k\leq n-1: \mathrm{gcd}(k,n)=d\}$, and $G_n^r(d)=\{k\in G_n(d): k\equiv r\pmod 4\}$, where $r=1,3$.  The definitions of Ramanujan's sum and Ramanujan's sine sum are as below.

\begin{defn}{\upshape\bfseries (\cite{R1918})}\label{def::1}
For any positive integers $n$ and $q$, \textbf{Ramanujan's sum} $c_{n}(q)$ is defined as
	\begin{equation*}		c_{n}(q)=\sum_{\substack{1 \leq a \leq n \\ \gcd(a, n)=1}} \omega_n^{aq}=\sum_{a\in G_n(1)} \cos \frac{2 \pi a q}{n}.
	\end{equation*}
\end{defn}
\begin{defn}{\upshape\bfseries (\cite{MB21a}, \cite{S22})}\label{def::2}
	For any positive integers $n$ ($n\equiv 0 \pmod 4$) and $q$, \textbf{Ramanujan's sine sum} $s_{n}(q)$ is defined as
	\begin{displaymath}
		s_{n}(q)=\ii \sum_{a\in G_n^{1}(1)} (\omega^{aq}_n-\omega^{-aq}_n) =-\sum_{a\in G_n^{1}(1)} 2 \sin \frac{2 \pi a q}{n}.
	\end{displaymath}
\end{defn}

Suppose that  $\Gamma=\IMCG_n(\B,\D,\sigma)$ is an integral mixed circulant graph, where $\B \subseteq \{ d: d\mid n,  1\leq d< n\}$, $\D \subseteq \{ d: d\mid n/4, 1\leq d\leq n/4\}$, and  $\sigma$ is a mapping from $ \D$ to $\{\pm 1\}$. Let $\C_1=\bigcup_{d\in \B} G_n(d)$ and $\C_2=\bigcup_{d\in \D} G_n^{\sigma(d)}(d)$.  Since the Hermitian adjacency matrix of $\Gamma$ is a circulant matrix, it is easy to see that $\Gamma$ has the eigenvalues
\begin{equation}\label{eq::001}
\gamma_j=\sum_{k\in \C_1}w_n^{jk}+\sum_{k\in \C_2}\ii(w_n^{jk}-w_n^{jk})=\sum_{d\in \B}\sum_{k\in G_n(d)}\omega_n^{jk}+\sum_{d\in \D}\sum_{k\in G_n^{\sigma(d)}(d)}\ii(w_n^{jk}-w_n^{jk}),
\end{equation}
and the eigenvectors
\begin{equation}\label{eq::002}
	\mathbf{v}_j=[1 \ \omega_n^j \ \omega_n^{2j} \cdots
	\omega_n^{(n-1)j}]^T,
\end{equation}
where $j=0,1,\ldots,n-1$. Observe that   $G_n(d)=dG_{n/d}(1)$ for all $d\in \B$, and if $n\equiv 0\pmod 4$ then $G_n^r(d)=dG_{n/d}^r(1)$ for all $d\in \D$, where $r=\pm 1$. Also note that $\omega_{n/d}=\omega_n^d$. By Definitions \ref{def::1} and \ref{def::2}, the eigenvalues of $\Gamma$ can be expressed as
\begin{equation}\label{eq::003}
\gamma_j=\sum_{d\in\B} c_{n/d}(j)+\sum_{d\in\D} \sigma(d) s_{n/d}(j),
\end{equation}
for $j=0,1,\ldots,n-1$. In what follows, we shall give an equivalent form of \Cref{eq::003}. Before doing this, we need some  basic properties of Ramanujan's sum $c_n(q)$.

\begin{prop}{\upshape\bfseries (\cite{HW08})}\label{prop::2.3}
	Let $\varphi(n)$ be the Euler's totient function. Then we have the following properties:
	\begin{enumerate}[label = \bf(\roman*)]
		\item $c_1(q)=1$, for all positive integers $q$;
        \item if $(m,n) = 1$, then $c_m(q)c_n(q)=c_{mn}(q)$;
		
		\item if $p$ is a prime number, then
		      \[
			      c_{p}(n)=
			      \begin{cases}
				      -1,         & \text{if $p\nmid n$}, \\
				      \varphi(p), & \text{if $p\mid n$};
			      \end{cases}
		      \]
		\item if $p^k$ is a prime power where $k > 1$, then
		      \[
			      c_{p^k}(n) =
			      \begin{cases}
				      0,           & \text{if $p^{k-1}\nmid n$},                       \\
				      -p^{k-1},    & \text{if $p^{k-1}\mid n$ and $p^k\nmid n$}, \\
				      \varphi(p^k)=(p-1)p^{k-1},& \text{if $p^k\mid n$};
			      \end{cases}
		      \]
        \item $c_{n}(q)=\mu(\frac{n}{(n,q)})\frac{\varphi(n)}{\varphi(\frac{n}{(n,q)})}$, where $\mu(n)$ is M\"{o}bius function defined by
        \[
        \mu(n)=
        \begin{cases}
        0,	    &   \text{if $n$ has one or more repeated prime factors},\\
        1,  	&   \text{if $n=1$},\\
        (-1)^k, &   \text{if $n$ is a product of $k$ distinct primes}.
        \end{cases}
        \]
	\end{enumerate}
\end{prop}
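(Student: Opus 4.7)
The plan is to derive all five items from a single M\"obius inversion, specialize it to prime powers, and reassemble via multiplicativity. First I would establish the auxiliary identity
\begin{equation*}
\sum_{d\mid n} c_d(q) \;=\; \sum_{a=1}^{n} \omega_n^{aq} \;=\; \begin{cases} n, & \text{if } n\mid q,\\ 0, & \text{otherwise}, \end{cases}
\end{equation*}
which follows by partitioning $\{1,\ldots,n\}$ according to the value of $\gcd(a,n)=n/d$ and observing that, after the substitution $a=(n/d)a'$ with $\gcd(a',d)=1$, each block reduces to the defining sum for $c_d(q)$. M\"obius inversion applied to this relation yields the Kluyver-type expression
\begin{equation*}
c_n(q) \;=\; \sum_{d\mid\gcd(n,q)} d\,\mu(n/d),
\end{equation*}
which will serve as the workhorse. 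Item (i) is immediate from the definition, since the only term in the sum is $\omega_1^{q}=1$.

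For items (iii) and (iv), I would specialize the Kluyver formula at $n=p^k$. The divisors of $\gcd(p^k,q)$ are powers of $p$, and since $\mu(p^j)$ vanishes for $j\ge 2$, at most two terms in the Kluyver sum contribute. A short case analysis depending on whether $p^k\mid q$, $p^{k-1}\mid q$ but $p^k\nmid q$, or $p^{k-1}\nmid q$, produces the three tabulated values; the case $k=1$ recovers (iii). For (ii), multiplicativity falls out of Kluyver directly: when $\gcd(m,n)=1$, the map $(d_1,d_2)\mapsto d_1 d_2$ is a bijection between pairs of divisors of $\gcd(m,q)$ and $\gcd(n,q)$ and divisors of $\gcd(mn,q)$, and $\mu(mn/(d_1 d_2))=\mu(m/d_1)\,\mu(n/d_2)$ by multiplicativity of $\mu$, so the sum defining $c_{mn}(q)$ factors as $c_m(q)\,c_n(q)$.

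Finally, the closed form (v) follows by combining (ii) with the prime-power values. Writing $n=\prod_p p^{k_p}$ and $g=\gcd(n,q)$, each local factor $c_{p^{k_p}}(q)$ is nonzero exactly when $p^{k_p-1}\mid q$, which translates into $p$ appearing with exponent $0$ or $1$ in $n/g$; in those two subcases the factor equals $\varphi(p^{k_p})$ or $-p^{k_p-1}$ respectively. Multiplying over all primes and using $\varphi(p^k)/\varphi(p^{k-1})=p$ for $k\ge 2$ together with $\varphi(p)=p-1$ reorganizes these local contributions into $\mu(n/g)\,\varphi(n)/\varphi(n/g)$. The main obstacle is precisely this final bookkeeping step: the signs contributed by the primes dividing $n/g$ must combine to give $\mu(n/g)$, and the ratios of local Euler totients must telescope into $\varphi(n)/\varphi(n/g)$, which requires a careful prime-by-prime distinction and is the only place where a computational slip is likely.
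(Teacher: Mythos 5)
Your proof is correct. The paper itself offers no argument for this proposition---it is quoted verbatim from Hardy and Wright \cite{HW08} as a known fact about Ramanujan's sums---so there is nothing internal to compare against; your route (the identity $\sum_{d\mid n}c_d(q)=n\,\mathds{1}_{\Z}(q/n)$, M\"obius inversion to Kluyver's formula $c_n(q)=\sum_{d\mid\gcd(n,q)}d\,\mu(n/d)$, prime-power specialization, multiplicativity, and then H\"older's evaluation (v)) is essentially the standard textbook derivation, and the final bookkeeping you flag does go through: for a prime $p$ with $v_p(q)=k_p-1$ the local ratio is $\varphi(p^{k_p})/\varphi(p)=p^{k_p-1}$, which together with the factor $-1$ from $\mu$ reproduces $-p^{k_p-1}$, while any prime with $v_p(q)\le k_p-2$ kills both sides.
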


For any subset $A\subseteq \mathbb{Q}$, the \textbf{indicator function}  $\mathds{1} _{A}\colon \Q\to \{0,1\}$ is defined by   $$\mathds{1}_{A}(x)=\left\{
\begin{array}{ll}
     1,& \mbox{if $x\in A$,}  \\
     0,& \mbox{otherwise.}
\end{array}\right.
$$

\begin{lem}\label{lem::2.4}
	Let $n=2^{t}m$ where $m$ is a positive odd integer and $t\geq 1$. Then
	\begin{equation}\label{eq::004}
	c_{n}(q)=\mathds{1}_{\Z}(q/2^{t-1}){(-1)}^{q/2^{t-1}}2^{t-1}c_{m}(q').
	\end{equation}
where $q'=q/2^{\vartheta_2(q)}\in 2\N+1$.
\end{lem}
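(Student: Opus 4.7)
The plan is to factor $c_n(q)$ multiplicatively and evaluate the two factors separately. Since $\gcd(2^t,m)=1$, \Cref{prop::2.3}(ii) gives
\[
c_n(q) \;=\; c_{2^t}(q)\, c_m(q),
\]
so it suffices to establish the two identities $c_{2^t}(q) = \mathds{1}_{\Z}(q/2^{t-1})(-1)^{q/2^{t-1}}2^{t-1}$ and $c_m(q) = c_m(q')$.

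For the $2$-power factor I would do a short case analysis using parts (iii) and (iv) of \Cref{prop::2.3}. When $t=1$, part (iii) gives $c_2(q)=1$ or $-1$ according as $q$ is even or odd, which matches $(-1)^q=\mathds{1}_{\Z}(q)(-1)^{q}\cdot 2^{0}$. When $t\geq 2$, part (iv) splits into three sub-cases: $c_{2^t}(q)=0$ if $2^{t-1}\nmid q$ (which the indicator kills); $c_{2^t}(q)=-2^{t-1}$ if $2^{t-1}\mid q$ but $2^t\nmid q$ (so $q/2^{t-1}$ is an odd integer, making $(-1)^{q/2^{t-1}}=-1$); and $c_{2^t}(q)=\varphi(2^t)=2^{t-1}$ if $2^t\mid q$ (so $q/2^{t-1}$ is an even integer, making $(-1)^{q/2^{t-1}}=1$). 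In all three sub-cases the compact expression reproduces the correct value.

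For the odd factor I would show $c_m(q)=c_m(q')$ by a substitution argument. Write $q=2^{\vartheta_2(q)}q'$ with $q'$ odd; since $m$ is odd, $2^{\vartheta_2(q)}$ is a unit modulo $m$, so the map $a\mapsto 2^{\vartheta_2(q)}a\pmod m$ permutes the reduced residue system $G_m(1)$. Changing variables in
\[
c_m(q)=\sum_{a\in G_m(1)}\omega_m^{aq}=\sum_{a\in G_m(1)}\omega_m^{a\cdot 2^{\vartheta_2(q)}q'}
\]
then yields $c_m(q')$. Alternatively, part (v) of \Cref{prop::2.3} together with the observation $\gcd(m,q)=\gcd(m,q')$ gives the same conclusion in one line.

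No step presents a serious obstacle; the only point that warrants care is verifying that the single compact expression $\mathds{1}_{\Z}(q/2^{t-1})(-1)^{q/2^{t-1}}2^{t-1}$ correctly unifies the three sub-cases of \Cref{prop::2.3}(iv), in particular that the parity of the integer $q/2^{t-1}$ reproduces the two nonzero signs arising from that proposition.
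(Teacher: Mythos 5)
Your proposal is correct and follows essentially the same route as the paper: factor $c_n(q)=c_{2^t}(q)c_m(q)$ by multiplicativity (Proposition \ref{prop::2.3}(ii)), reduce $c_m(q)$ to $c_m(q')$ via $\gcd(m,q)=\gcd(m,q')$ and part (v), and evaluate $c_{2^t}(q)$ from the prime-power formula. Your explicit handling of the $t=1$ case via part (iii) is a minor refinement the paper glosses over (it cites only (iv), which assumes $k>1$), but the argument is otherwise identical.
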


\begin{proof}
Let $m\in 2\N+1$ and $q'=q/2^{\vartheta_2(q)}\in 2\N+1$. Since $(m,q')=(m,q)$ and $(2^{t},m)=1$, by~\Cref{prop::2.3}-{\bf(ii), (v)}, we have
	\begin{equation}\label{eq::005}
		c_{n}(q)=c_{2^{t}m}(q)=c_{2^{t}}(q)c_{m}(q)=c_{2^{t}}(q)c_{m}(q').
	\end{equation}
Furthermore, by~\Cref{prop::2.3}-{\bf (iv)},  $c_{2^{t}}(q)={(-1)}^{q/2^{t-1}}2^{t-1}$ if $q/2^{t-1}\in \mathbb{Z}$, and $c_{2^{t}}(q)=0$ otherwise. Combining this with~\Cref{eq::005}, the result follows.
\end{proof}

For $n\equiv 0\pmod 4$, Song~\cite[Theorem 2.7]{S22} expressed Ramanujan's sine sum $s_{n}(q)$ in terms of Ramanujan's sum.

\begin{lem}{\upshape\bfseries (See~\cite[Theorem 2.7]{S22})}\label{lem::2.5}
	Let $n=2^{t}m$ where  $m$ is an odd positive integer and $t\geqslant 2$. Then
	\[
		s_{n}(q)=
			\mathds{1}_{2\N+1}(q'){(-1)}^{(m-1)/2}{(-1)}^{(q'+1)/2}2^{t-1}c_{m}(q'),
	\]
	where $q'=q/2^{t-2}\in \N$.
\end{lem}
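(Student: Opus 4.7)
My plan is to evaluate the exponential sum $T_n(q) := \sum_{a \in G_n^{1}(1)} \omega_n^{aq}$ in closed form and then extract $s_n(q)$ via the identity $s_n(q) = \ii(T_n(q) - \overline{T_n(q)}) = -2\,\mathrm{Im}\,T_n(q)$. Since $t \ge 2$ forces $4 \mid n$, every $a \in G_n^{1}(1)$ has a unique representation $a = 1 + 4k$ with $0 \le k \le N-1$, where $N = n/4 = 2^{t-2}m$, and the condition $\gcd(a,n) = 1$ reduces to $\gcd(1+4k, m) = 1$ (coprimality with $2^t$ is automatic). Factoring out $\omega_n^q$ gives
\[
T_n(q) = \omega_n^q \sum_{\substack{0 \le k \le N-1 \\ \gcd(1+4k,\,m)=1}} \omega_N^{kq}.
\]

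Next I would decouple the $2$-part and the odd part of $N$ additively by writing $k = um + v$ with $u \in \{0,\dots,2^{t-2}-1\}$ and $v \in \{0,\dots,m-1\}$; the coprimality constraint depends only on $v$, and $\omega_N^{kq} = \omega_{2^{t-2}}^{uq}\,\omega_N^{vq}$. The $u$-sum is a geometric series equal to $2^{t-2}$ if $2^{t-2} \mid q$ and $0$ otherwise, which handles the case $2^{t-2} \nmid q$ immediately (both sides of the claim vanish). Assuming $q' := q/2^{t-2} \in \N$, I would transform the $v$-sum by the substitution $b \equiv 1 + 4v \pmod m$: since $m$ is odd, $\gcd(4,m) = 1$, and this is a bijection of $\{0,\dots,m-1\}$ matching $\{v : \gcd(1+4v,m) = 1\}$ with $\{b : \gcd(b,m) = 1\}$. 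Setting $\kappa \equiv 4^{-1} \pmod m$ so that $v \equiv (b-1)\kappa \pmod m$, pulling out $\omega_m^{-\kappa q'}$, and invoking \Cref{prop::2.3}-{\bf (v)} (which gives $c_m(\kappa q') = c_m(q')$ since $\gcd(\kappa,m) = 1$), I obtain
\[
T_n(q) = 2^{t-2}\,c_m(q')\,\omega_n^q\,\omega_m^{-\kappa q'}.
\]

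The remaining and most delicate task is to simplify the residual phase $\omega_n^q \omega_m^{-\kappa q'}$ and match its imaginary part to the sign $(-1)^{(m-1)/2}(-1)^{(q'+1)/2}$. Writing $4\kappa - 1 = m\ell$ for an integer $\ell$ and using $q = 2^{t-2}q'$, the combined exponent collapses to $-\ell q'/4$, so
\[
T_n(q) = 2^{t-2}\,c_m(q')\,(-\ii)^{\ell q'}.
\]
The congruence $m\ell \equiv -1 \pmod 4$ forces $\ell$ to be odd and $\ell \equiv -m \pmod 4$; hence $(-\ii)^{\ell q'}$ is real whenever $q'$ is even, yielding $s_n(q) = 0$ and matching $\mathds{1}_{2\N+1}(q') = 0$. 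For $q' \in 2\N+1$, writing $\ell = 2\lambda+1$ and $q' = 2s+1$ gives $-\mathrm{Im}((-\ii)^{\ell q'}) = (-1)^{\lambda+s}$; combined with $\lambda \equiv (m+1)/2 \pmod 2$ (read off from $\ell \equiv -m \pmod 4$) and $(q'+1)/2 = s+1$, this equals $(-1)^{(m-1)/2}(-1)^{(q'+1)/2}$. Multiplying by $-2^{t-1}c_m(q')$ via $s_n(q) = -2\,\mathrm{Im}\,T_n(q)$ produces the claimed identity.
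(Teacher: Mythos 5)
Your argument is correct: I checked the parametrization $a=1+4k$ (coprimality to $n$ does reduce to coprimality to $m$ since $a$ is odd), the splitting $k=um+v$ with the geometric $u$-sum detecting $2^{t-2}\mid q$, the bijection $b\equiv 1+4v \pmod m$ together with $c_m(\kappa q')=c_m(q')$ from \Cref{prop::2.3}-{\bf(v)}, and the phase computation $\omega_n^q\omega_m^{-\kappa q'}=(-\ii)^{\ell q'}$ with $4\kappa-1=m\ell$ (note $\ell$ is only determined modulo $4$, which is exactly what $(-\ii)^{\ell q'}$ needs, so this is harmless); the parity bookkeeping $\ell\equiv-m\pmod 4$, $\lambda\equiv(m+1)/2\pmod 2$, and $(-1)^{\lambda+s}=(-1)^{(m-1)/2}(-1)^{(q'+1)/2}$ all check out, and the final assembly via $s_n(q)=-2\,\mathrm{Im}\,T_n(q)$ gives the stated identity (I also verified it numerically for $n=4,8,12,24$). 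However, there is nothing in this paper to compare your proof against: \Cref{lem::2.5} is imported verbatim from~\cite[Theorem 2.7]{S22} and no proof is given here, so your contribution is a self-contained derivation of a quoted result. As such it is a genuinely independent verification, obtained by evaluating the one-sided exponential sum $T_n(q)=\sum_{a\in G_n^1(1)}\omega_n^{aq}$ in closed form and taking its imaginary part, which has the added benefit of also covering the degenerate case $2^{t-2}\nmid q$ (where both $T_n(q)$ and $s_n(q)$ vanish), a case excluded by the hypothesis $q'\in\N$ in the statement.
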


Now we give another expression of the eigenvalues of an integral mixed circulant graph.

\begin{thm}\label{thm::2.6}
	Let $\Gamma=\IMCG_n(\B,\D,\sigma)$ be an integral mixed circulant graph. Then the eigenvalues of $\Gamma$ are
	\begin{equation*}
		\begin{aligned}
			\gamma_j=\sum_{d\in \B_0} c_{n/d}(j/2^{\vartheta_2(j)}) & +\sum_{i=1}^{\vartheta_2(n)}\sum_{d\in \B_i} \mathds{1}_{\Z}(j/2^{i-1}){(-1)}^{j/2^{i-1}} 2^{i-1}c_{\frac{n}{2^{i}d}}(j/2^{\vartheta_2(j)})                                             \\
			                                     & +\sum_{d\in \D_i} \mathds{1}_{2\N+1}(j/2^{i-2}) \sigma(d) {(-1)}^{(\frac{n}{2^{i}d}-1)/2} {(-1)}^{(\frac{j}{2^{i-2}}+1)/2} 2^{i-1} c_{\frac{n}{2^{i}d}} (j/2^{i-2}),
		\end{aligned}
	\end{equation*}
	for $0\leq j\leq n-1$.
\end{thm}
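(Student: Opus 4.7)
The plan is to substitute \Cref{lem::2.4} and \Cref{lem::2.5} directly into \Cref{eq::003}, which reads $\gamma_j = \sum_{d\in\B} c_{n/d}(j) + \sum_{d\in\D} \sigma(d) s_{n/d}(j)$. Using the partitions $\B = \bigcup_{i=0}^{\vartheta_2(n)} \B_i$ and $\D = \bigcup_{i=2}^{\vartheta_2(n)} \D_i$, I will rewrite each summand according to the $2$-adic valuation $i = \vartheta_2(n/d)$.

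For $d \in \B_0$ the integer $n/d$ is odd, so $\gcd(n/d, j) = \gcd(n/d, j/2^{\vartheta_2(j)})$, and by \Cref{prop::2.3}-\textbf{(v)} Ramanujan's sum depends only on this gcd; hence $c_{n/d}(j) = c_{n/d}(j/2^{\vartheta_2(j)})$, which is the first term of the target formula. For $d \in \B_i$ with $i \geq 1$, write $n/d = 2^{i} m'$ with $m' := n/(2^{i} d)$ odd and apply \Cref{lem::2.4} with $n \mapsto n/d$, $t \mapsto i$, $m \mapsto m'$: this immediately yields the second term. Finally, for $d \in \D_i$ with $i \geq 2$, the analogous decomposition together with \Cref{lem::2.5} produces $\sigma(d) s_{n/d}(j)$ in the form of the third term.

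Summing these three contributions over $\B_0$, $\bigcup_{i\geq 1}\B_i$, and $\bigcup_{i\geq 2}\D_i$ respectively reassembles the claimed expression. There is no substantive obstacle here; the proof is essentially a bookkeeping exercise. The only points requiring care are: (i) handling $\B_0$ separately, since \Cref{lem::2.4} requires $t \geq 1$ and does not apply when $n/d$ is odd; (ii) noting that $\D_1 = \emptyset$ because $d \mid n/4$ forces $\vartheta_2(n/d) \geq 2$, so the $\B_i$ and $\D_i$ terms may be combined under a single outer index $i$ running from $1$ to $\vartheta_2(n)$; and (iii) keeping the two transformed arguments distinct, namely $j/2^{\vartheta_2(j)}$ (the odd part of $j$) for the \Cref{lem::2.4} substitution and $j/2^{i-2}$ for the \Cref{lem::2.5} substitution.
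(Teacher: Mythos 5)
Your proposal is correct and follows essentially the same route as the paper: partition $\B$ and $\D$ by the $2$-adic valuation of $n/d$, substitute \Cref{lem::2.4} for the $\B_i$ terms ($i\geq 1$) and \Cref{lem::2.5} for the $\D_i$ terms into \Cref{eq::003}, and reassemble. Your explicit justification of the $\B_0$ term via \Cref{prop::2.3}-\textbf{(v)} (and the remark that $\D_1=\emptyset$) only makes precise what the paper leaves implicit.
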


\begin{proof}
  Let $\B=\bigcup_{i=0}^{\vartheta_2(n)} \B_i$ where $\B_i$ is defined in Section \ref{sec::1}. Then
\begin{equation}\label{eq::006}
	\sum_{d\in\B} c_{n/d}(j)=\sum_{\bigcup_{i=0}^{\vartheta_2(n)}\B_i}c_{n/d}(j)=\sum_{d\in \B_0} c_{n/d}(j)+\sum_{i=1}^{\vartheta_2(n)}\sum_{d\in \B_i} c_{n/d}(j).
\end{equation}
Substituting~\Cref{eq::004} into~\Cref{eq::006}, we obtain
\begin{equation}\label{eq::007}
	\sum_{d\in\B} c_{n/d}(j)=\sum_{d\in \B_0} c_{n/d}(j/2^{\vartheta_2(j)})+\sum_{i=1}^{\vartheta_2(n)}\sum_{d\in \B_i} \mathds{1}_{\Z}(j/2^{i-1}){(-1)}^{j/2^{i-1}} 2^{i-1}c_{\frac{n}{2^{i}d}}(j/2^{\vartheta_2(j)}).
\end{equation}
On the other hand, by~\Cref{lem::2.5},
\begin{equation}\label{eq::008}
	\sum_{d\in\D} \sigma(d) s_{n/d}(j)=\sum_{d\in \D_i} \mathds{1}_{2\N+1}(j/2^{i-2}) \sigma(d) {(-1)}^{(\frac{n}{2^{i}d}-1)/2} {(-1)}^{(\frac{j}{2^{i-2}}+1)/2} 2^{i-1} c_{\frac{n}{2^{i}d}} (j/2^{i-2}).
\end{equation}
Combining~\Cref{eq::003},~\Cref{eq::007} and~\Cref{eq::008}, the result follows.
\end{proof}

By Theorem \ref{thm::2.6}, we obtain the following corollary immediately.

\begin{cor}\label{cor::2.7}
	Let $\Gamma=\IMCG_n(\B,\D,\sigma)$ be an integral mixed circulant graph. Then the eigenvalues of $\Gamma$ are $\gamma_j$, for  $0\leq j\leq n-1$, which falls into one of the following three classes:
	\begin{enumerate}[label = \bf(\roman*)]
		\item for $j\in 2\N+1$,
	   	\begin{equation*}
	\gamma_j=\sum_{d\in \B_0} c_{n/d}(j)-\sum_{d\in \B_1} c_{n/(2d)}(j)+2\Lambda_1(j),
	\end{equation*}
	where $\Lambda_1(j)=\sum_{d\in \D_2}\sigma(d) {(-1)}^{(\frac{n}{4d}-1)/2} {(-1)}^{(j+1)/2} c_{\frac{n}{4d}} (j)$;
	\item for $j\in 4\N+2$,
	\begin{equation*}
		\gamma_j=\sum_{d\in \B_0} c_{n/d}(j/2)+\sum_{d\in \B_1} c_{n/(2d)}(j/2)-2\sum_{d\in \B_2} c_{n/(4d)}(j/2)+4 \Lambda_2(j) ,
	\end{equation*}
	 where $\Lambda_2(j)=\sum_{d\in \D_3}\sigma(d) {(-1)}^{(\frac{n}{8d}-1)/2} {(-1)}^{(\frac{j}{2}+1)/2} c_{\frac{n}{8d}} (j/2)$;
	 \item for $j\in 4\N$,
	\begin{equation*}
		\gamma_j=\sum_{d\in \B_0} c_{n/d}(j')+\sum_{d\in \B_1} c_{n/(2d)}(j')+2\sum_{d\in \B_2} c_{n/(4d)}(j')+4\sum_{d\in \B_3} {(-1)}^{j/4}c_{n/(8d)}(j')+8\Lambda_3(j),
	\end{equation*}
	for $j\in 4\N$ and $j'=j/2^{\vartheta_2(j)}$, where
	\begin{equation*}
		\begin{aligned}
			\Lambda_3(j) & =\sum_{i=4}^{\vartheta_2(n)}\sum_{d\in \B_i} \mathds{1}_{\Z}(j/2^{i-1}){(-1)}^{j/2^{i-1}} 2^{i-4}c_{\frac{n}{2^{i}d}}(j')                                                                    \\
			             & +\sum_{\substack{d\in \D_i \\ i\geq  4}} \mathds{1}_{2\N+1}(j/2^{i-2}) \sigma(d) {(-1)}^{(\frac{n}{2^{i}d}-1)/2} {(-1)}^{(\frac{j}{2^{i-2}}+1)/2} 2^{i-4} c_{\frac{n}{2^{i}d}} (j/2^{i-2}).
		\end{aligned}
	\end{equation*}
	\end{enumerate}
\end{cor}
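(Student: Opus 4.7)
The plan is to derive the corollary directly from~\Cref{thm::2.6} by a case analysis on the $2$-adic valuation $\vartheta_2(j)$. The three cases (i), (ii), (iii) of the corollary correspond exactly to $\vartheta_2(j) = 0$, $\vartheta_2(j) = 1$, and $\vartheta_2(j) \geq 2$. In each case I will identify which indices $i$ make the indicator $\mathds{1}_{\Z}(j/2^{i-1})$ nonzero in the $\B$-sum and which make $\mathds{1}_{2\N+1}(j/2^{i-2})$ nonzero in the $\D$-sum, then compute the sign $(-1)^{j/2^{i-1}}$ explicitly for those surviving indices. The remaining step is a repackaging of terms to match the stated formulas.

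For case (i), where $j$ is odd, the condition $j/2^{i-1}\in\Z$ forces $i\in\{0,1\}$, and $(-1)^j = -1$ at $i=1$; together with $j/2^{\vartheta_2(j)}=j$ this yields the two explicit $\B$-terms. For the $\D$-sum, $j/2^{i-2}\in 2\N+1$ forces $i=2$, and we get $j/2^{i-2}=j$, so the single surviving layer is $2\Lambda_1(j)$ after pulling out the factor $2^{i-1}=2$. Case (ii), where $j\equiv 2\pmod 4$, is parallel: in the $\B$-sum, $i\in\{0,1,2\}$ contribute with signs $+1,+1,-1$ (because $j$ is even but $j/2$ is odd), giving coefficients $1,1,-2$; in the $\D$-sum only $i=3$ survives (since writing $j=2m$ with $m$ odd, $j/2^{i-2}$ is odd precisely when $i=3$), yielding $4\Lambda_2(j)$.

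For case (iii), where $4\mid j$, set $j' = j/2^{\vartheta_2(j)}$. For $i\in\{1,2\}$ the quotient $j/2^{i-1}$ is an even integer, so $(-1)^{j/2^{i-1}}=+1$, producing the summands with coefficients $1$ and $2$. For $i=3$ the quotient $j/4$ is an integer that can have either parity, so the sign $(-1)^{j/4}$ is retained with coefficient $4$. All terms with $i\geq 4$ in the $\B$-sum, together with the entire $\D$-sum (whose indicator forces $i=\vartheta_2(j)+2\geq 4$), are absorbed into $\Lambda_3(j)$, with the factor $2^{i-1}$ from~\Cref{thm::2.6} rewritten as $8\cdot 2^{i-4}$ to match the definition of $\Lambda_3$.

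The proof is thus a direct verification, and the only real obstacle is careful bookkeeping: correctly identifying which indices $i$ survive each indicator, computing $(-1)^{j/2^{i-1}}$ based on the parity of $j/2^{i-1}$ in each range, and making sure the powers of $2$ pulled outside the $\Lambda_k$'s match the stated coefficients. No idea beyond~\Cref{thm::2.6} is required.
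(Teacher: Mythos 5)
Your proposal is correct and follows exactly the route the paper intends: the paper offers \Cref{cor::2.7} as an immediate consequence of \Cref{thm::2.6}, and your case analysis on $\vartheta_2(j)$ (identifying the surviving indices $i\in\{1\}$, $\{1,2\}$, and $\{1,2,3\}\cup\{i\ge 4\}$ for the $\B$-sum, and $i=\vartheta_2(j)+2$ for the $\D$-sum, with the correct signs and powers of $2$) is precisely that verification. Nothing is missing.
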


\section{Proof of \texorpdfstring{\Cref{thm::1.2}}{Theorem 1.2}}\label{sec::3}
In the section, we shall prove \Cref{thm::1.2}, which extends some results of~\cite{BPS09,Ba13,S22}.

Let $H$ be the Hermitian adjacency matrix of a mixed graph $\Gamma$ with order $n$. Denote by $\gamma_0, \ldots, \gamma_{n-1}$ the eigenvalues of $H$, and $\mathbf{u}_0,\ldots,\mathbf{u}_{n-1}$  the corresponding orthonormal eigenvectors. By \textbf{spectral decomposition} (see~\cite[Theorem 5.5.1]{Go93}),
$$H=\sum_{r=0}^{n-1}\gamma_{r} \mathbf{u}_{r}\mathbf{u}_{r}^*,$$
where $\mathbf{u}_{r}^*$ denotes the conjugate transpose of $\mathbf{u}_{r}$. Then the transition matrix $U(t)$ of $H$ can be written as
\begin{equation*}
	U(t) =
	\sum_{r=0}^{n-1} \exp(\ii t \gamma_{r}) \mathbf{u}_{r}\mathbf{u}_{r}^*.
\end{equation*}

Suppose that $\Gamma=\IMCG_n(\B,\D,\sigma)$ is an integral mixed circulant graph. Let $H_\Gamma$ be the Hermitian adjacency matrix of $\Gamma$. By
\textbf{spectral decomposition} (see~\cite[Theorem 5.5.1]{Go93}),  the transition matrix $U(t) = \exp(\ii t H_\Gamma)$ of $H_\Gamma$ can be expressed as
\begin{equation}\label{eq::009}
	U(t)=\frac{1}{n} \sum_{j=0}^{n-1} \exp(\ii \gamma_j t) \mathbf{v}_j \mathbf{v}_j^*,
\end{equation}
where $\gamma_j$ and $\mathbf{v}_j$ ($0\leq j\leq n-1$) are the eigenvalues and eigenvectors of $H_\Gamma$ shown in \Cref{eq::001} and \Cref{eq::002}, respectively.
In particular, by \Cref{eq::002} and \cref{eq::009},
\begin{equation}\label{eq::010}
	{U(t)}_{ab}=\frac{1}{n} \sum_{r=0}^{n-1} \exp(\ii \gamma_r t)
	\omega_n^{r(a-b)}=\frac{1}{n} \sum_{r=0}^{n-1} \exp\left(\ii \gamma_r t+\ii\frac{2\pi r (a-b)}{n}\right).
\end{equation}
This expression is given in~\cite[Proposition 1]{SSS07}.
The main goal is to investigate whether there exist distinct integers
$a,b \in \Z_n$ and a positive real number $t$ such that
$\lvert {U(t)}_{ab} \rvert =1$. Let $M_r=\ii (\gamma_r t+2\pi r (a-b)/n)$, for all $1 \leq r\leq n-1$. Obviously, $\lvert {U(t)}_{ab} \rvert \leq 1$, and equality holds if and only if for all $1 \leq r\leq n-1$, the exponents $\exp(M_r)$ are equal in \cref{eq::010}, or equivalently, $(M_{r+1}-M_{r})/(2\pi)\in \Z$. Let $t' = t/(2\pi)$. Then
\[
	\frac{M_{r+1}-M_{r}}{2\pi}=(\gamma_{r+1}-\gamma_r)t'+\frac{a-b}{n}\in \Z,
\]
for all $r=0,\ldots,n-1$. Since $\gamma_r\in \Z$ for all $r=0,\ldots,n-1$, we have $t'\in \Q$. Now we can obtain a necessary and sufficient condition as follows.

\begin{thm}\label{thm::3.1}
	Let $\Gamma=\IMCG_n(\B,\D,\sigma)$ be an integral mixed circulant graph. Then for distinct $a,b \in \Z_n$, $\Gamma$ has $\PST$ between vertices $a$ and $b$
	if and only if there are integers $p$ and $q$ such that
	$\gcd(p,q)=1$ and
	\begin{equation}\label{eq::011}
		\frac{p}{q}(\gamma_{j+1}-\gamma_j)+\frac{a-b}{n}
		\in \Z,
	\end{equation}
	for all $j=0,\ldots,n-1$.
\end{thm}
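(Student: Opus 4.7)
My approach is to formalize the computation already sketched in the paragraphs preceding the theorem, based on the standard equivalence between $\PST$ at time $t$ and a single entry of the transition matrix having modulus one. Concretely, since $H_\Gamma$ is Hermitian, $U(t)=\exp(\ii tH_\Gamma)$ is unitary, so $\PST$ from $a$ to $b$ is equivalent to $|U(t)_{ba}|=1$ for some $t\in\R$: once one entry of the unit-norm column $U(t)\mathbf{e}_a$ has modulus one, every other entry must vanish, giving $U(t)\mathbf{e}_a=\gamma\mathbf{e}_b$ with $|\gamma|=1$.

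With this reduction in place, I would apply the triangle inequality to the explicit sum in~\Cref{eq::010}. Each of the $n$ summands has modulus $1/n$, so $|U(t)_{ab}|\leq 1$ with equality precisely when all summands $\exp(M_r)$ share a common argument, i.e.\ $M_{r+1}-M_r\in 2\pi\ii\,\Z$ for every $r$, where $M_r=\ii(\gamma_r t+2\pi r(a-b)/n)$. Dividing by $2\pi\ii$, this condition simplifies to $(\gamma_{r+1}-\gamma_r)t'+(a-b)/n\in\Z$ for all $r$, with $t'=t/(2\pi)$. Integrality of $\Gamma$ gives $\gamma_{r+1}-\gamma_r\in\Z$, and combined with $a\neq b$ this forces $t'\in\Q$; writing $t'=p/q$ in lowest terms then yields the divisibility stated in~\Cref{eq::011}. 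The converse is a direct reversal: given coprime integers $(p,q)$ satisfying~\Cref{eq::011}, set $t=2\pi p/q$, so that all $M_r$ are congruent modulo $2\pi\ii$, the sum in~\Cref{eq::010} collapses to a single unimodular number, and $\PST$ follows.

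The main subtlety is really just the first step, namely checking that $|U(t)_{ba}|=1$ already implies the full identity $U(t)\mathbf{e}_a=\gamma\mathbf{e}_b$; this uses only unitarity of $U(t)$. A secondary bookkeeping point is that~\Cref{eq::010} is written with entry $U(t)_{ab}$ rather than $U(t)_{ba}$, but this mismatch only swaps $a-b$ for $b-a$, which is absorbed by replacing $(p,q)$ with $(-p,q)$---an operation that preserves $\gcd(p,q)=1$. Beyond these details, no substantive obstacle arises, and the theorem should follow from cleanly packaging the triangle-inequality argument already implicit in the discussion leading up to the statement.
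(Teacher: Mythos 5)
Your proposal is correct and follows essentially the same route as the paper, which proves \Cref{thm::3.1} via the discussion preceding it: the triangle-inequality/phase-alignment argument applied to \Cref{eq::010}, forcing $t'=t/(2\pi)\in\Q$ by integrality and writing $t'=p/q$ in lowest terms. Your added remarks (unitarity reducing $\PST$ to a single entry of modulus one, and the $a-b$ versus $b-a$ sign being absorbed by $p\mapsto -p$) are just explicit bookkeeping of steps the paper leaves implicit.
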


Suppose that there exists $\gamma_{j}=\gamma_{j+1}$ for $j=0,\ldots,n-1$. By~\Cref{eq::011}, we have $(a-b)/n\in \Z$. Since $0\neq |a-b|<n$, a contradiction. Therefore, we can obtain the following corollary.

\begin{cor}
  Let $\Gamma=\IMCG_n(\B,\D,\sigma)$ be an integral mixed circulant graph. If there exists $\gamma_{j}=\gamma_{j+1}$ for all $j=0,\ldots,n-1$, then $\Gamma$ has no $\PST$.
\end{cor}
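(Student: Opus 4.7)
The plan is a direct contradiction argument using the PST criterion of \Cref{thm::3.1}. Suppose toward a contradiction that $\Gamma$ admits $\PST$ between some distinct vertices $a,b\in\Z_n$. Then \Cref{thm::3.1} furnishes coprime integers $p,q$ for which
$$\frac{p}{q}(\gamma_{j+1}-\gamma_j)+\frac{a-b}{n}\in\Z$$
holds for every $j\in\{0,1,\ldots,n-1\}$. I would then specialize this identity to an index $j_0$ at which $\gamma_{j_0}=\gamma_{j_0+1}$, which is supplied by the hypothesis of the corollary. The first summand vanishes independently of the chosen $p/q$, so the membership collapses to $(a-b)/n\in\Z$.

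To close the argument, I would regard $a$ and $b$ as canonical representatives in $\{0,1,\ldots,n-1\}$. The assumption $a\neq b$ then forces $0<|a-b|\leq n-1<n$, so $(a-b)/n$ cannot be an integer. This contradicts the membership derived above, hence no such pair $(a,b)$ exists and $\Gamma$ admits no $\PST$. There is no genuine obstacle here; the only point deserving a brief sanity check is that the rational coefficient $p/q$ plays no role once $\gamma_{j_0+1}-\gamma_{j_0}=0$, so the whole argument reduces to one line of arithmetic applied to \Cref{eq::011}.
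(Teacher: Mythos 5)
Your proposal is correct and follows essentially the same route as the paper: apply the criterion of \Cref{thm::3.1} at an index where $\gamma_{j}=\gamma_{j+1}$, so that \Cref{eq::011} forces $(a-b)/n\in\Z$, contradicting $0<|a-b|<n$. No further comment is needed.
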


According to \Cref{thm::3.1}, if there exists $\PST$ on integral mixed circulant graphs, then we can easily deduce the following corollary.
\begin{cor}\label{cor::4.1}
	Let $\Gamma=\IMCG_n(\B,\D,\sigma)$ be an integral mixed circulant graph. Then for distinct $a,b \in \Z_n$ and $1\leq k\leq n$, $\Gamma$ has $\PST$ between vertices $a$ and $b$
	if and only if there are integers $p$ and $q$ such that
	$\gcd(p,q)=1$ and
	\begin{equation}\label{eq::012}
		\frac{p}{q}(\gamma_{j+k}-\gamma_j)+\frac{k(a-b)}{n}
		\in \Z,
	\end{equation}
	for all $j=0,\ldots,n-1$.
\end{cor}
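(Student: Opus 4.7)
The plan is to derive \Cref{cor::4.1} directly from \Cref{thm::3.1} (which is precisely the $k=1$ instance of \Cref{eq::012}) via a telescoping argument. The key observation is that if coprime $p,q$ work for a single-step shift, they also work for any $k$-step shift by summing, and vice versa one can unravel the $k$-step relation orbit-wise.

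For the forward direction, I would assume $\Gamma$ has $\PST$ between $a$ and $b$. Applying \Cref{thm::3.1} yields coprime integers $p,q$ with
\[
\frac{p}{q}(\gamma_{j+1}-\gamma_j)+\frac{a-b}{n}\in\Z
\]
for every $j=0,1,\ldots,n-1$, where subscripts are read cyclically modulo $n$ (so that $\gamma_{n}=\gamma_{0}$, reflecting the fact that $H_\Gamma$ is circulant). Summing this containment over the $k$ consecutive indices $j,j+1,\ldots,j+k-1$ collapses the successive $\gamma$-differences into $\gamma_{j+k}-\gamma_{j}$ and aggregates the constant terms into $\frac{k(a-b)}{n}$, producing \Cref{eq::012} with the same $p,q$.

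For the reverse direction, I would introduce the auxiliary map $f:\Z_{n}\to\R/\Z$ defined by $f(j)=\frac{p}{q}\gamma_{j}+\frac{j(a-b)}{n}\pmod{1}$, which is well-defined because $\gamma_{j+n}=\gamma_{j}$ and $a-b\in\Z$. Then the hypothesis \Cref{eq::012} translates into $f(j+k)=f(j)$ for all $j\in\Z_{n}$, i.e.\ $f$ is constant on each coset of the cyclic subgroup $\langle k\rangle\subseteq\Z_{n}$. Iterating along the orbit of $k$ under addition reduces this $k$-shifted relation back to the constancy of $f$, which is exactly the $k=1$ condition of \Cref{thm::3.1}; invoking that theorem then returns $\PST$. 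The principal bookkeeping point—and the only mildly delicate part of the argument—is handling the cyclic reading of $\gamma_{j+k}$ when $j+k$ exceeds $n-1$, so that the telescoping in the forward direction and the orbit traversal in the reverse direction both collapse cleanly; apart from this index reduction modulo $n$, the derivation is a formal computation.
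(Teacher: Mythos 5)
Your forward direction is exactly the intended deduction and is correct: since the eigenvalues of a circulant are periodic in the index ($\gamma_{j+n}=\gamma_j$), summing \Cref{eq::011} over the $k$ consecutive indices $j,j+1,\dots,j+k-1$ telescopes to \Cref{eq::012} with the same coprime pair $p,q$; this forward implication is in fact the only part of \Cref{cor::4.1} the paper ever uses (e.g.\ with $k=2$ later on).

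The reverse direction as you argue it has a genuine gap. From $f(j+k)=f(j)$ for all $j$ you can only conclude that $f$ is constant on the cosets of the subgroup $\langle k\rangle=\langle\gcd(k,n)\rangle$ of $\Z_n$; when $\gcd(k,n)>1$ these cosets are proper, the hypothesis says nothing about how $f$ compares across different cosets, and so ``iterating along the orbit of $k$'' cannot recover global constancy of $f$, i.e.\ the $k=1$ condition of \Cref{thm::3.1}. The failure is not cosmetic: for $k=n$ the hypothesis \Cref{eq::012} is vacuous, because $\gamma_{j+n}=\gamma_j$ and $n(a-b)/n=a-b\in\Z$, so your argument would certify $\PST$ between every pair of distinct vertices of every integral mixed circulant graph; likewise for $k=2$ with $n$ even the hypothesis only links eigenvalues whose indices have the same parity. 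Your orbit argument is valid precisely when $\gcd(k,n)=1$. The reading under which the ``if'' part of the corollary is sound, and the one the paper intends by deducing it ``easily'' from \Cref{thm::3.1}, is that \Cref{eq::012} is required to hold for every $k$ with $1\le k\le n$ (with one common pair $p,q$); under that reading the reverse implication is immediate by specializing to $k=1$ and invoking \Cref{thm::3.1}, and no orbit traversal is needed. So either restrict your converse to $\gcd(k,n)=1$ or replace it by this trivial specialization.
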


In general, if an integral mixed circulant graph has $\PST$ between vertices $a$ and $b$, then the order of $a-b$ is two. Furthermore, the order-two element is unique, that is, $a-b=n/2$. By~\Cref{thm::3.1}, we give a necessary and sufficient condition for the existence of $\PST$ on between vertices $b+n/2$ and $b$ on integral mixed circulant graphs. The next lemma is derived from~\Cref{thm::3.1}, which can be used as the criterion for determining the existence of $\PST$. For the proof of~\Cref{lem::3.2}, see~\cite[Lemma 3.3]{S22}.

\begin{lem}{\upshape\bfseries (See~\cite[Lemma 3.3]{S22})}\label{lem::3.2}
	Let $\Gamma=\IMCG_n(\B,\D,\sigma)$ be an integral mixed circulant graph. Then for all $b \in \Z_n$, $\Gamma$ has $\PST$ between vertices $b+n/2$ and $b$ if and only if there exists a
	number $m \in \N$ such that
	\begin{equation} \label{eq:a}
		\vartheta_2(\gamma_{j+1}-\gamma_j)=m,
	\end{equation}
	for all
	$j=0,1,\ldots, n-1$.
\end{lem}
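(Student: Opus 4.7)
The plan is to invoke \Cref{thm::3.1} with $a = b + n/2$, giving $(a-b)/n = 1/2$, so that $\PST$ between $b+n/2$ and $b$ is equivalent to the existence of coprime integers $p, q$ satisfying
\[
\frac{p}{q}(\gamma_{j+1}-\gamma_j) + \frac{1}{2} \in \Z \quad \text{for all } j = 0,1,\ldots,n-1,
\]
or equivalently, $\dfrac{2p(\gamma_{j+1}-\gamma_j)}{q}$ is an odd integer for every $j$. The task then reduces to showing that such a pair $(p,q)$ exists if and only if $\vartheta_2(\gamma_{j+1}-\gamma_j)$ takes a common value $m\in \N$ independent of $j$.

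For the forward direction, I would first note that $\gamma_{j+1}\neq \gamma_j$ for every $j$, since otherwise the displayed condition would force $\tfrac{1}{2}\in \Z$. Write $q = 2^s q'$ with $q'$ odd, and for each $j$ write $\gamma_{j+1}-\gamma_j = 2^{m_j} n_j$ with $n_j$ odd. Substituting gives
\[
\frac{2p(\gamma_{j+1}-\gamma_j)}{q} = 2^{m_j+1-s}\cdot \frac{p\, n_j}{q'}.
\]
Since $\gcd(p,q)=1$ implies $\gcd(p,q')=1$, the odd part $q'$ must divide $n_j$, and then $p n_j/q'$ is automatically an odd integer. For the whole expression to be odd, the 2-adic exponent must vanish, i.e.\ $m_j = s-1$ for every $j$. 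As $s$ does not depend on $j$, the common value $m := s-1\in \N$ is produced.

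For the converse, assume $\vartheta_2(\gamma_{j+1}-\gamma_j) = m$ for all $j$ and set $p = 1$, $q = 2^{m+1}$, which are coprime. Writing $\gamma_{j+1}-\gamma_j = 2^m n_j$ with $n_j$ odd, a direct check yields
\[
\frac{p}{q}(\gamma_{j+1}-\gamma_j) + \frac{1}{2} = \frac{n_j}{2} + \frac{1}{2} = \frac{n_j+1}{2} \in \Z,
\]
so \Cref{thm::3.1} delivers the desired $\PST$. The main obstacle is the 2-adic bookkeeping in the forward direction: the same pair $(p,q)$ must simultaneously satisfy the criterion for every $j$, and one must extract from this global constraint the uniform 2-adic valuation of all consecutive differences. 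Once that is in hand, the reverse direction is a one-line verification, and no structural information about $\B$, $\D$, or $\sigma$ needs to enter at this stage.
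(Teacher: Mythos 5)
Your proposal is correct and follows essentially the same route as the source: the paper does not reprove this lemma but derives it from \Cref{thm::3.1} exactly as you do (this is the argument of \cite[Lemma 3.3]{S22}), namely specializing $(a-b)/n=1/2$, rewriting the condition as $2p(\gamma_{j+1}-\gamma_j)/q$ being odd, and matching $2$-adic valuations against $q=2^{s}q'$, with the converse given by $p=1$, $q=2^{m+1}$. One tiny point to tidy: in the forward direction ``$p\,n_j/q'$ is automatically odd'' needs $p$ odd, which follows because oddness of $2^{m_j+1-s}p\,n_j/q'$ forces $s\geq 1$ and hence, by $\gcd(p,q)=1$, $p$ odd.
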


\begin{lem}{\upshape\bfseries (See~\cite[Lemma 3.4]{S22})}\label{lem::3.3}
	Let $n$ be a positive integer and let $\D$ be an odd positive integer set. Then for all odd integer $1\leq q\leq n$ and each positive integers $k$, $\sum_{d\in \D}{(-1)}^{k}c_{d}(q)$ have the same parity if and only if $\D=\{1\}$.

\end{lem}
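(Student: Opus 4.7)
The plan is as follows. The forward implication $\D=\{1\}\Rightarrow$ constant parity is immediate: in that case $\sum_{d\in\D}(-1)^{k}c_{d}(q)=(-1)^{k}c_{1}(q)=(-1)^{k}$, which is $\pm 1$ and hence odd regardless of $q$ and $k$. I will therefore concentrate on the converse. Since multiplying by $(-1)^{k}$ leaves parity unchanged, the hypothesis reduces to: $F(q):=\sum_{d\in\D}c_{d}(q)$ has constant parity on the odd integers of $[1,n]$.

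The key algebraic tool will be the classical M\"obius expansion $c_{d}(q)=\sum_{e\mid\gcd(d,q)}e\,\mu(d/e)$, which follows from \Cref{prop::2.3}\textbf{(v)}. Because every $d\in\D$ is odd, every divisor $e$ of $d$ is odd, hence $e\equiv 1\pmod 2$ and $c_{d}(q)\equiv\sum_{e\mid\gcd(d,q)}\mu(d/e)\pmod 2$. Swapping the summations over $d$ and $e$ gives
\begin{equation*}
F(q)\equiv\sum_{e\mid q}g(e)\pmod 2,\qquad g(e):=\sum_{d\in\D,\,e\mid d}\mu(d/e).
\end{equation*}
From the assumed constancy $F(m)\equiv c\pmod 2$ for every odd $m\leq n$, M\"obius inversion on the divisor lattice yields
\begin{equation*}
g(m)\equiv\sum_{e\mid m}\mu(m/e)F(e)\equiv c\sum_{e\mid m}\mu(m/e)=c\cdot[m=1]\pmod 2,
\end{equation*}
so $g(m)$ is even for every odd $m>1$ within the probed range. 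A second M\"obius inversion of the chain $e\mid d$ recovers the characteristic function of $\D$: $\mathds{1}_{\D}(d)=\sum_{k\geq 1}g(dk)$. For $d>1$ every index $dk$ exceeds $1$, and provided each $dk$ with $g(dk)\neq 0$ lies in $[1,n]$, every such term is even, forcing $\mathds{1}_{\D}(d)\equiv 0\pmod 2$ and hence $d\notin\D$. Thus $\D\subseteq\{1\}$, and combined with the implicit hypothesis $\D\neq\emptyset$ this gives $\D=\{1\}$.

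The main obstacle is range control. I can only probe $F$ at odd $q\in[1,n]$, so the first M\"obius inversion that kills $g(m)$ is valid only for $m\leq n$. The second inversion therefore tacitly requires $\max(\D)\leq n$, which is precisely the setting in which the lemma is applied (in \Cref{cor::2.7} the relevant indices are of the form $n/(2^{i}d')$ with $d'$ a divisor of $n$, all bounded by $n$). A secondary caveat is that $\D=\emptyset$ vacuously satisfies the constancy hypothesis, so the statement implicitly assumes $\D\neq\emptyset$; this should be remembered when invoking the lemma. Aside from these bookkeeping issues, the argument is a straightforward double application of M\"obius inversion once the reduction modulo $2$ has eliminated the factor $e$ inside the Ramanujan sum.
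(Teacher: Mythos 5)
The paper does not actually prove this lemma: it is imported verbatim from~\cite[Lemma 3.4]{S22}, so there is no in-paper argument to compare yours against. Judged on its own, your double M\"obius-inversion argument is correct: reducing modulo $2$ via $c_{d}(q)=\sum_{e\mid\gcd(d,q)}e\,\mu(d/e)$ (note this von Sterneck expansion comes from the definition of the Ramanujan sum by M\"obius inversion, not really from \Cref{prop::2.3}\textbf{(v)}, which is H\"older's formula --- a cosmetic misattribution), then inverting once to force $g(m)\equiv 0\pmod 2$ for odd $1<m\leq n$ and once more to recover $\mathds{1}_{\D}$, does yield $\D\subseteq\{1\}$, and the forward direction is trivial. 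Your two caveats are not defects of your write-up but genuine imprecisions in the statement as quoted: without the implicit hypotheses that $\D$ is a nonempty finite set whose elements are bounded by $n$ (in every application here they are odd divisors of $n$, arising as $n/d$ or $n/(2^{i}d)$ in \Cref{cor::2.7}), the ``only if'' direction is false (e.g.\ $n=1$ with $\D=\{3\}$, or $\D=\emptyset$), so flagging them is exactly right. One further remark: as used in \Cref{lem::3.7} and \Cref{lem::3.11} the lemma is really applied to signed combinations $\sum_{d}\pm\,c_{d}(q)$ (the factor ${(-1)}^{k}$ is meant to encode per-term signs), but since signs do not affect parity your reduction to the unsigned sum $F(q)$ covers that use unchanged, with ``$\D=\{1\}$'' read as a statement about the multiset of indices surviving cancellation.
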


\begin{lem}\label{lem::3.7}
	Let $\Gamma=\IMCG_n(\B,\D,\sigma)$ be an integral mixed circulant graph. For $j\in 2\N+1$, eigenvalues $\gamma_j$ of $\Gamma$ have the same parity if and only if $\B_0=2(\B_1\setminus\{n/2\})$.
\end{lem}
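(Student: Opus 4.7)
The plan is to use the formula from \Cref{cor::2.7}-{\bf(i)} and reduce the parity question for $\gamma_j$ to an application of \Cref{lem::3.3}. For $j \in 2\N+1$ we have
\[
\gamma_j = \sum_{d \in \B_0} c_{n/d}(j) - \sum_{d \in \B_1} c_{n/(2d)}(j) + 2\Lambda_1(j),
\]
so the $2\Lambda_1(j)$ term drops modulo $2$ and the minus sign flips to a plus, leaving the parity of $\gamma_j$ controlled by $\sum_{d \in \B_0} c_{n/d}(j) + \sum_{d \in \B_1} c_{n/(2d)}(j) \pmod 2$.

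I would then peel off the index $d = n/2 \in \B_1$ (when present), since it contributes $c_1(j) = 1$ independently of $j$. Writing $\B_1^* = \B_1 \setminus \{n/2\}$, set
\[
A = \{n/d : d \in \B_0\}, \qquad B = \{n/(2d) : d \in \B_1^*\}.
\]
By the definitions of $\B_0$ and $\B_1^*$, each element of $A \cup B$ is an odd divisor of $n$ strictly greater than $1$. Contributions from $A \cap B$ appear twice and vanish modulo $2$, so
\[
\gamma_j \equiv \mathds{1}_{\B_1}(n/2) + \sum_{x \in A \triangle B} c_x(j) \pmod 2.
\]

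The main step is to apply \Cref{lem::3.3} (with $k$ any positive integer) to the set $A \triangle B$: it consists of odd positive integers and does not contain $1$, so the lemma yields that $\gamma_j$ has the same parity for every $j \in 2\N+1$ if and only if $A \triangle B = \emptyset$, i.e., $A = B$. To finish, I would re-express $A = B$ as $\B_0 = 2\B_1^*$. Both sides of the latter equation are subsets of the divisors of $n$ whose $2$-adic valuation equals $\vartheta_2(n)$ and which are strictly less than $n$; every such divisor can be written uniquely as $n/k$ for some odd $k \geq 3$ with $k \mid n$. Under this parametrization, $n/k \in \B_0 \Leftrightarrow k \in A$, and $n/k \in 2\B_1^* \Leftrightarrow n/(2k) \in \B_1^* \Leftrightarrow k \in B$; hence $A = B$ is equivalent to $\B_0 = 2\B_1^*$.

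The step I expect to be the main obstacle is the careful handling of $d = n/2 \in \B_1$: because $c_1(j)$ is always $1$, this index carries no parity information and must be excluded before \Cref{lem::3.3} is applied, which is exactly why the statement uses $\B_1 \setminus \{n/2\}$ rather than $\B_1$ itself. A secondary subtlety is verifying that $1 \notin A \cup B$, so that the ``$D = \{1\}$'' alternative of \Cref{lem::3.3} cannot be triggered.
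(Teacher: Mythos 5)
Your proposal is correct and follows essentially the same route as the paper: both start from \Cref{cor::2.7}\,\textbf{(i)}, peel off the constant contribution $c_1(j)=\mathds{1}_{\B}(n/2)$ from $d=n/2$, and then invoke \Cref{lem::3.3} to force the remaining sums to cancel. Your symmetric-difference bookkeeping with $A\triangle B$ (and the check that $1\notin A\cup B$) merely makes explicit the cancellation and the application of \Cref{lem::3.3} that the paper's proof states in one line, so no substantive difference or gap.
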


\begin{proof}
     For $j\in 2\N+1$, by~\Cref{cor::2.7}, the eigenvalues of $\Gamma$ are
	\begin{equation}\label{eq::11}
   \begin{aligned}
   	\gamma_j & =\sum_{d\in \B_0} c_{n/d}(j)-\sum_{d\in \B_1} c_{n/(2d)}(j)+2\Lambda_1(j)                                      \\
   	         & =\sum_{d\in \B_0} c_{n/d}(j)-\sum_{d\in \B_1\setminus\{n/2\}} c_{n/(2d)}(j)-\mathds{1}_{\B}(n/2)+2\Lambda_1(j).
   \end{aligned}
	\end{equation}

  ($\Leftarrow$): If $\B_0=2(\B_1\setminus\{n/2\})$, then we obtain
    	\begin{equation}\label{eq::22}
		\gamma_j=-\mathds{1}_{\B}(n/2)+2\Lambda_1(j).
	\end{equation}
  Therefore, the conclusion ($\Leftarrow$) holds.

  ($\Rightarrow$): (Proof by contradiction) Suppose that  $\B_0\neq 2(\B_1\setminus\{n/2\})$. By~\Cref{lem::3.3}, eigenvalues $\gamma_j$ of $\Gamma$ have no the same parity for $j\in 2\N+1$, therefore, the conclusion holds.

\end{proof}

\begin{lem}\label{lem::3.77}
Let $\Gamma=\IMCG_n(\B,\D,\sigma)$ be an integral mixed circulant graph. If $\Gamma$ has $\PST$. Then all eigenvalues have the same parity. Especially, if $n/2 \in \B$ then all eigenvalues are odd, otherwise they are even.
\end{lem}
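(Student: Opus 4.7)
The plan is to combine \Cref{lem::3.2} with \Cref{lem::3.7} and a direct parity computation of $\gamma_0$. By \Cref{lem::3.2}, the existence of $\PST$ between $b+n/2$ and $b$ produces an integer $m\ge 0$ satisfying $\vartheta_2(\gamma_{j+1}-\gamma_j)=m$ for every $j$. When $m\ge 1$, consecutive differences are even, so every $\gamma_j$ shares a single common parity; when $m=0$, the parities of $\gamma_0,\gamma_1,\ldots,\gamma_{n-1}$ strictly alternate. In either case the subsequence of eigenvalues $\gamma_j$ with $j$ odd already shares a common parity, so the ``only if'' direction of \Cref{lem::3.7} forces $\B_0=2(\B_1\setminus\{n/2\})$. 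Under this equality the two sums $\sum_{d\in\B_0}c_{n/d}(j)$ and $\sum_{d\in\B_1\setminus\{n/2\}}c_{n/(2d)}(j)$ in the odd-$j$ case of \Cref{cor::2.7} cancel (exactly as in the proof of \Cref{lem::3.7}), yielding $\gamma_j=-\mathds{1}_{\B}(n/2)+2\Lambda_1(j)$ and hence $\gamma_j\equiv\mathds{1}_{\B}(n/2)\pmod 2$ for every odd $j$.

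Next I would compute $\gamma_0$ directly. Because $s_{n/d}(0)=0$ for every $d\in\D$, the sine-sum contribution vanishes at $j=0$, leaving $\gamma_0=\sum_{d\in\B}\varphi(n/d)$. The value $\varphi(n/d)$ is odd only when $n/d\in\{1,2\}$; the case $n/d=1$ is excluded because $d<n$ in $\B$, so the only odd summand comes from $d=n/2$. Hence $\gamma_0\equiv\mathds{1}_{\B}(n/2)\pmod 2$, matching the common parity of the odd-indexed eigenvalues found above. Therefore $\gamma_1-\gamma_0$ is even, ruling out $m=0$ and forcing $m\ge 1$. Consequently every eigenvalue of $\Gamma$ has parity $\mathds{1}_{\B}(n/2)\pmod 2$, i.e.\ odd precisely when $n/2\in\B$ and even otherwise.

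The delicate step is ruling out $m=0$: \Cref{lem::3.2} by itself permits the eigenvalues to alternate in parity, and one must compare the parity of $\gamma_0$ (computed from $\sum_{d\in\B}\varphi(n/d)$) with the common parity of the odd-indexed eigenvalues (obtained after invoking \Cref{lem::3.7}) to close this gap. The structural identity $\B_0=2(\B_1\setminus\{n/2\})$ supplied by \Cref{lem::3.7}, which itself leans on \Cref{lem::3.3}, is what lets the odd-$j$ formula collapse to the clean form $-\mathds{1}_{\B}(n/2)+2\Lambda_1(j)$; without that simplification the two parities would be considerably harder to compare.
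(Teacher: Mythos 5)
Your argument is correct, and its skeleton coincides with the paper's: both proofs use \Cref{lem::3.2} to get a constant $2$-adic valuation $m$ of consecutive differences, deduce $\B_0=2(\B_1\setminus\{n/2\})$ from \Cref{lem::3.7} so that $\gamma_j=-\mathds{1}_{\B}(n/2)+2\Lambda_1(j)$ for odd $j$ (\Cref{eq::22}), and then compare with an even-indexed eigenvalue to force $m\geq 1$. The difference lies in that last comparison and in how explicitly the logic is carried out. The paper closes the argument by rewriting the $j\in 4\N+2$ eigenvalues via \Cref{cor::2.7} as \Cref{eq::19}, whose constant term $\mathds{1}_{\B}(n/2)$ matches the odd-$j$ parity; you instead evaluate $\gamma_0=\sum_{d\in\B}\varphi(n/d)$ and note that $\varphi(n/d)$ is odd exactly when $d=n/2$, which is a more elementary computation and avoids the second application of the $\B_0=2\B_1^*$ substitution. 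You are also more careful than the printed proof on two points it leaves implicit: first, that $\B_0=2(\B_1\setminus\{n/2\})$ must be secured \emph{before} \Cref{eq::22} and \Cref{eq::19} can be written down, which you justify by observing that whether $m=0$ (alternating parities) or $m\geq 1$, the odd-indexed eigenvalues already share a parity, so the ``only if'' direction of \Cref{lem::3.7} applies; second, that the parity of the $j\in 4\N$ eigenvalues is not covered by \Cref{eq::22} or \Cref{eq::19} alone and is only recovered through the constancy of $\vartheta_2(\gamma_{j+1}-\gamma_j)$ once $m\geq 1$ is established. (Like the paper, you tacitly read ``$\Gamma$ has $\PST$'' as $\PST$ between antipodal vertices $b$ and $b+n/2$, so that \Cref{lem::3.2} applies; this is consistent with the paper's usage.)
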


\begin{proof}
If $\Gamma$ has $\PST$, by~\Cref{cor::2.7}, then
\begin{equation}\label{eq::19}
		\gamma_j=2\sum_{d\in \B_1\setminus\{n/2\}} c_{n/(2d)}(j/2)+\mathds{1}_{\B}(n/2)-2\sum_{d\in \B_2} c_{n/(4d)}(j/2)+4\Lambda_2(j).
\end{equation}
for all $j\in 4\N+2$.
From~\Cref{eq::22} and \Cref{eq::19}, we can obtain that $\gamma_{j+1}-\gamma_{j}\in 2\N$ for all $j=0,\ldots,n-1$, namely, all eigenvalues have the same parity. Meanwhile, all eigenvalues are odd if $n/2 \in \B$, otherwise even.
\end{proof}

Ba\v{s}i\'{c}~\cite[Theorem 22]{Ba13} and Song~\cite[Theorem 1.2]{S22} gave a characterization of integral circulant graphs and integral oriented circulant graphs admitting PST, respectively.

\begin{lem}{\upshape\bfseries (See~\cite[Theorem 22]{Ba13})}\label{lem::3.91}
Let $\Gamma=\IMCG_n(\B,\emptyset)$ be an integral circulant graph. Then $\Gamma$ has $\PST$ if and only if $n\in 4\N$,
	$\B^*_1=2\B^*_2$, $\B_0=4\B^*_2$ and either $n/4\in \B$ or $n/2\in \B$,
	where $\B^*_2=\B_2\setminus \{n/4\}$ and $\B^*_1=\B_1\setminus \{n/2\}$.
\end{lem}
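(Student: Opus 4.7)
The plan is to specialize the criterion of \Cref{lem::3.2} together with the eigenvalue formulas of \Cref{cor::2.7} to the undirected case $\D=\emptyset$. In that setting the auxiliary terms $\Lambda_1,\Lambda_2,\Lambda_3$ vanish and each eigenvalue $\gamma_j$ becomes a signed sum of Ramanujan sums indexed by the partition $\B=\bigcup_{i\geq 0}\B_i$. By \Cref{lem::3.2}, PST from $b$ to $b+n/2$ is equivalent to the $2$-adic valuation $\vartheta_2(\gamma_{j+1}-\gamma_j)$ being a single constant $m\in\N$ as $j$ ranges over $\{0,1,\ldots,n-1\}$, so the argument proceeds as a tower of parity and $2$-adic steps that progressively pin down $\B_0$, $\B_1$, and $\B_2$.

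At the mod-$2$ level, \Cref{lem::3.77} forces a common parity on all $\gamma_j$, and \Cref{lem::3.7} with $\D=\emptyset$ then yields $\B_0=2\B^*_1$; as a byproduct $n$ is even and the common parity equals $\mathds{1}_\B(n/2)$, which primes the final dichotomy. At the mod-$4$ level I would use \Cref{cor::2.7}\textbf{(ii)} for $j\in 4\N+2$: after pairing the $\B_0$ and $\B^*_1$ sums via $\B_0=2\B^*_1$, the expression reduces to $\mathds{1}_\B(n/2)+2\sum_{d\in\B^*_1}c_{n/(2d)}(j/2)-2\sum_{d\in\B_2}c_{n/(4d)}(j/2)$. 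Requiring $\vartheta_2(\gamma_{j+1}-\gamma_j)$ to take the same value on $j\in 2\N+1$ and on $j\in 4\N+2$ forces $4\mid n$ and, via a variant of \Cref{lem::3.3} applied to odd-indexed Ramanujan sums over the moduli $n/(4d)$, yields the set equation $\B^*_1=2\B^*_2$; composition then gives $\B_0=4\B^*_2$.

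Under these three equalities the expression for $\gamma_j$ on $j\in 4\N+2$ collapses further to the constant $\mathds{1}_\B(n/2)-2\mathds{1}_\B(n/4)$, and an analogous cancellation in the $j\in 4\N$ branch of \Cref{cor::2.7}\textbf{(iii)} shows that the contributions of $\B_i$ for $i\geq 3$ enter any consecutive difference with factor at least $2^{i-1}$, hence cannot disturb the constant $\vartheta_2$-value coming from $\B_0\cup\B_1\cup\B_2$. For the dichotomy, I would check that if neither $n/2$ nor $n/4$ lies in $\B$ then all consecutive differences vanish after cancellation and \Cref{lem::3.2} cannot be satisfied (contradicting the corollary of \Cref{thm::3.1} since $0\neq n/2 < n$); conversely, with at least one of $n/2,n/4$ in $\B$ a direct computation produces the required constant $m\geq 1$ and gives sufficiency.

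The main obstacle will be the mod-$4$ bookkeeping used to extract $\B^*_1=2\B^*_2$: one must evaluate $c_{n/(4d)}(j/2)$ on each of the residue classes of $j$ modulo $4$ through \Cref{prop::2.3}\textbf{(iv)} and simultaneously match $2$-adic valuations of the resulting integer sums across these classes. This combinatorial core is essentially Ba\v{s}i\'{c}'s original argument and relies on \Cref{lem::3.3} applied to suitably scaled Ramanujan sums over odd moduli.
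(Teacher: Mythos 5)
First, note that the paper itself offers no proof of this lemma: it is imported verbatim from Ba\v{s}i\'{c} \cite{Ba13}, and it then serves as the $\D=\emptyset$ base case inside \Cref{lem::3.11}. So your plan --- re-deriving it by specializing \Cref{cor::2.7}, \Cref{lem::3.2}, \Cref{lem::3.3}, \Cref{lem::3.7} and \Cref{lem::3.77} to $\D=\emptyset$ --- is not circular and is essentially the undirected specialization of the paper's proof of \Cref{thm::1.2}; as a strategy it is fine. Two smaller inaccuracies: $\Lambda_3$ does \emph{not} vanish when $\D=\emptyset$ (it contains the $\B_i$, $i\ge 4$, contributions), even though you implicitly repair this later; and the central mod-$4$ step --- that a constant $\vartheta_2(\gamma_{j+1}-\gamma_j)$ forces $4\mid n$ and $\B^*_1=2\B^*_2$ --- is only asserted ``via a variant of \Cref{lem::3.3}'', which is precisely the combinatorial core you acknowledge but leave unproven (it is also where $n\equiv 2\pmod 4$ has to be excluded).

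The genuine gap is in your final dichotomy. You claim that ``with at least one of $n/2,n/4$ in $\B$ a direct computation produces the required constant $m\ge 1$''; this fails when \emph{both} $n/2$ and $n/4$ lie in $\B$. With $\D=\emptyset$ one has $\Lambda_2\equiv 0$, so by \Cref{lem::3.13} (or directly) $\gamma_{j+1}-\gamma_j=4\Lambda_2(j+1)=0$ for $j\in 4\N+1$, which is incompatible with \Cref{lem::3.2}. Concretely, $K_4=\IMCG_4(\{1,2\},\emptyset)$ has $\B^*_1=\B^*_2=\B_0=\emptyset$, contains both $n/2=2$ and $n/4=1$, and has spectrum $\{3,-1,-1,-1\}$, hence no $\PST$. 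Thus the ``either\,--\,or'' in the statement must be read exclusively (this is exactly why \Cref{thm::1.2} treats $\{n/4,n/2\}\subseteq\B$ separately in case \ref{thm::1.2::4}, where it additionally requires $\D_3=\{n/8\}$, which is impossible when $\D=\emptyset$). Your proof therefore needs two repairs: drop the sufficiency claim for the ``both'' configuration, and in the necessity direction explicitly rule that configuration out (via the vanishing differences above); as written, the sketch does neither, so the characterization you would obtain is strictly weaker than --- indeed inconsistent with --- the cited theorem.
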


\begin{lem}{\upshape\bfseries (See~\cite[Theorem 1.2]{S22})}\label{lem::3.10}
	Let $\Gamma=\IMCG_n(\emptyset,\D,\sigma)$ be an integral oriented circulant graph. Then the following three statements are equivalent:
	\begin{enumerate}[label = \bf(\roman*)]
		\item $\Gamma$ has $\PST$ between vertices $b+n/2$ and $b$, for all $b \in \Z_n$;
		\item $n\in 4\N$ and $\D_2=\{n/4\}$.
	\end{enumerate}
\end{lem}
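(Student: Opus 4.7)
My plan is to apply \Cref{lem::3.2}, which reduces PST between $b$ and $b+n/2$ for every $b\in\Z_n$ to verifying that $\vartheta_2(\gamma_{j+1}-\gamma_j)=m$ takes a fixed value in $\N$ as $j$ varies. With $\B=\emptyset$, \Cref{cor::2.7} specializes to $\gamma_j=2(-1)^{(j+1)/2}T(j)$ for odd $j$, $\gamma_j=4\Lambda_2(j)$ for $j\in 4\N+2$, and $\gamma_j=8\Lambda_3(j)$ for $j\in 4\N$, where $T(j):=\sum_{d\in\D_2}\sigma(d)(-1)^{(n/(4d)-1)/2}c_{n/(4d)}(j)$. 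Consequently $\vartheta_2(\gamma_j)\geq 2$ whenever $j$ is even, while $\vartheta_2(\gamma_j)=1+\vartheta_2(T(j))$ whenever $j$ is odd. A constant value of $m$ across all $j$ thus forces $T(j)$ to have the same parity for every odd $j$.

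For $(\mathrm{ii})\Rightarrow(\mathrm{i})$, I would argue that the hypothesis $\D_2=\{n/4\}$ forces $n/4$ to be odd (any even $d\in\D$ contributes $G_n^{\sigma(d)}(d)=\emptyset$), so $n\equiv 4\pmod 8$; then every admissible $d\in\D$ is an odd divisor of $n/4$ with $\vartheta_2(n/d)=2$, yielding $\D=\D_2=\{n/4\}$. Substituting into the eigenvalue formula gives $\gamma_j=\pm 2\sigma(n/4)$ for every odd $j$ and $\gamma_j=0$ for every even $j$, so $\vartheta_2(\gamma_{j+1}-\gamma_j)=1$ throughout, and \Cref{lem::3.2} delivers the required PST.

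For $(\mathrm{i})\Rightarrow(\mathrm{ii})$, I first rule out $n\not\in 4\N$ (in which case \Cref{thm::1.1} forces $\D=\emptyset$, so $\Gamma$ is edgeless and cannot admit PST) and $\D_2=\emptyset$ (in which case $T(j)=0$ for every odd $j$, so $\gamma_1-\gamma_0=0$, driving all $\gamma_j$ to $0$ and $\Gamma$ to the empty graph). Assuming $n\in 4\N$ and $\D_2\neq\emptyset$, the constant-parity condition on $T(j)$ simplifies modulo $2$ to $T(j)\equiv\sum_{d\in\D_2}c_{n/(4d)}(j)\pmod 2$, since the sign factors $\sigma(d)(-1)^{(n/(4d)-1)/2}\in\{\pm 1\}$ disappear. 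The index set $\{n/(4d):d\in\D_2\}$ is a set of odd positive integers (by the definition of $\D_2$), so \Cref{lem::3.3} applies and forces this set to equal $\{1\}$, i.e., $\D_2=\{n/4\}$.

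The main obstacle is to rule out the a priori possibility $m\geq 2$, which would correspond to $T(j)$ being even for every odd $j$. Here the same application of \Cref{lem::3.3} still forces $\D_2=\{n/4\}$, but this in turn gives $T(j)=\sigma(n/4)c_1(j)=\pm 1$, which is odd and contradicts the evenness assumption. Hence only $m=1$ is admissible, and the equivalence closes.
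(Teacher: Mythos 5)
The paper does not actually prove this lemma --- it is imported wholesale from \cite[Theorem 1.2]{S22} --- so you are supplying an argument where the authors only cite. Your necessity direction is essentially right and matches the machinery the paper uses elsewhere: reduce $T(j)$ modulo $2$ so the signs $\sigma(d)(-1)^{(n/(4d)-1)/2}$ disappear, apply \Cref{lem::3.3} to the odd set $\{n/(4d):d\in\D_2\}$ to force $\D_2=\{n/4\}$, and exclude $m\geq 2$ by noting that \Cref{lem::3.3} would still give $\D_2=\{n/4\}$ and hence $T(j)=\pm c_1(j)=\pm 1$, which is odd. The preliminary exclusions ($n\notin 4\N$ gives $\D=\emptyset$; $\D_2=\emptyset$ gives $\gamma_1=\gamma_0=0$, which kills \Cref{eq:a}) are also fine, although ``driving all $\gamma_j$ to $0$'' overstates what you need.

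The gap is in sufficiency. Your claim that $\D_2=\{n/4\}$ forces $n/4$ to be odd, because ``any even $d\in\D$ contributes $G_n^{\sigma(d)}(d)=\emptyset$,'' rests on reading the condition $k\equiv r\pmod 4$ in the definition of $G_n^r(d)$ literally. That reading is inconsistent with how the paper actually uses these sets: in Fig.~\ref{FIG} the set $G_8^{3}(2)$ is taken to be $\{6\}$ (the operative condition is $k/d\equiv r\pmod 4$, which is also what makes the identity $G_n^r(d)=dG_{n/d}^r(1)$ invoked in \Cref{sec::2} valid), and \Cref{thm::1.3} explicitly demands $n\in 8\N$ together with $\D_2=\{n/4\}$, i.e.\ an even $n/4$; under your reading that theorem would be vacuous. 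Consequently your deductions ``$n\equiv 4\pmod 8$'' and ``$\D=\D_2=\{n/4\}$'' are false: condition (ii) places no restriction on $\D_i$ for $i\geq 3$, and for instance $\IMCG_8(\emptyset,\{1,2\},\sigma)$ satisfies (ii) with $\D_2=\{2\}$ and $\D_3=\{1\}$, yet has $\gamma_2=\pm 4\neq 0$, contradicting your ``$\gamma_j=0$ for every even $j$.'' The conclusion itself survives --- for odd $j$ only $\D_2$ contributes, giving $\gamma_j=2(-1)^{(j+1)/2}T(j)$ with $T(j)=\pm c_1(j)=\pm 1$, hence $\gamma_j\equiv 2\pmod 4$, while \Cref{cor::2.7} gives $4\mid\gamma_j$ for every even $j$, so $\vartheta_2(\gamma_{j+1}-\gamma_j)=1$ for all $j$ --- but the argument must be run with the full decomposition of \Cref{cor::2.7}, not under the unwarranted assumption $\D=\{n/4\}$.
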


\begin{lem}\label{lem::3.11}
	Let $\Gamma=\IMCG_n(\B,\D,\sigma)$ be an integral mixed circulant graph. If $\Gamma$ has $\PST$, then $n\in 4\N$, $\D_2\setminus\{n/4\}=\emptyset$, $\B_0=2\B^*_1=4\B^*_2$, where $\B^*_2=\B_2\setminus \{n/4\}$ and $\B^*_1=\B_1\setminus \{n/2\}$.
\end{lem}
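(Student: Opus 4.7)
The strategy is to exploit the PST criterion of~\Cref{lem::3.2} (constant 2-adic valuation $m$ for every difference $\gamma_{j+1}-\gamma_j$) together with the three piecewise formulas for $\gamma_j$ in~\Cref{cor::2.7}, reading off the necessary conditions on $\B$ and $\D$ one 2-adic level at a time.

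The identity $\B_0=2\B^{*}_1$ is essentially free: by \Cref{lem::3.77} all eigenvalues share a single parity, and restricting to odd $j$ the characterisation of \Cref{lem::3.7} delivers $\B_0=2\B^{*}_1$. Substituting this into~\Cref{cor::2.7}-{\bf (i)} simplifies the odd-$j$ eigenvalues to $\gamma_j=-\mathds{1}_\B(n/2)+2\Lambda_1(j)$. For $n\in 4\N$ I would argue by contradiction: if $n$ is odd, $\Z_n$ has no element of order two so no PST between $b+n/2$ and $b$ is defined; if $n\equiv 2\pmod 4$, the constraint $\D\subseteq\{d:d\mid n/4\}$ forces $\D=\emptyset$, making $\Gamma$ an integral (undirected) circulant graph, so \Cref{lem::3.91}---whose first requirement is $n\in 4\N$---produces the contradiction.

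To derive $\B^{*}_1=2\B^{*}_2$ I would probe one level deeper. Using $\B_0=2\B^{*}_1$ to pair each $d\in\B_0$ with $d/2\in\B^{*}_1$ in~\Cref{cor::2.7}-{\bf (ii)}, and peeling off the special elements $n/2\in\B_1$ and $n/4\in\B_2$ via $c_1(q)=1$, the $j\in 4\N+2$ eigenvalues can be rewritten as
\begin{equation*}
\gamma_j = \mathds{1}_\B(n/2)-2\mathds{1}_\B(n/4) + 2\Bigl(\sum_{d'\in\B^{*}_1}c_{n/(2d')}(j/2)-\sum_{d\in\B^{*}_2}c_{n/(4d)}(j/2)\Bigr) + 4\Lambda_2(j).
\end{equation*}
Constancy of $\vartheta_2(\gamma_{j+1}-\gamma_j)$ across consecutive $j$ then forces the bracketed Ramanujan combination to have constant parity in $j/2\in 2\N+1$, and \Cref{lem::3.3} applied to the signed multiset $\{n/(2d'):d'\in\B^{*}_1\}$ versus $\{n/(4d):d\in\B^{*}_2\}$ shows that these two multisets must coincide, which is exactly $\B^{*}_1=2\B^{*}_2$.

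The remaining condition $\D_2\setminus\{n/4\}=\emptyset$ is the subtlest and is the main obstacle. Suppose for contradiction some $d_0\in\D_2$ has $m_0:=n/(4d_0)>1$ (necessarily odd). Its contribution to $\Lambda_1(j)$ is $\sigma(d_0)(-1)^{(m_0-1)/2}(-1)^{(j+1)/2}c_{m_0}(j)$, whose magnitude depends on $\gcd(m_0,j)$ via \Cref{prop::2.3}-{\bf (v)} while its sign toggles between successive odd $j$. I would select two odd indices---one coprime to $m_0$ and one sharing a prime factor with $m_0$---and show that the resulting values of $\gamma_{j+1}-\gamma_j$ have different 2-adic valuations, contradicting \Cref{lem::3.2}. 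The technical heart is ensuring the chosen indices really produce a detectable change, that is, that the other contributions from $\B$ and the rest of $\D$ do not cancel the $d_0$-term; this will likely require iterating along the prime factors of $m_0$ and exploiting that $\sigma$ takes only the values $\pm 1$.
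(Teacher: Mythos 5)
Your plan has a genuine gap at the step you yourself call the main obstacle: the proof that $\D_2\setminus\{n/4\}=\emptyset$. What you offer there is an index-selection scheme (one odd $j$ coprime to $m_0=n/(4d_0)$, one sharing a prime factor) whose ``technical heart'' --- that the contributions of the other divisors do not cancel the $d_0$-term --- is explicitly left unresolved. That is not a routine detail: controlling cancellation among the divisors inside $\Lambda_1(j)$ is precisely the content of \Cref{lem::3.3}, and your prime-by-prime iteration would amount to reproving it. Moreover, your worry about interference from $\B$ is unnecessary at that stage: once $\B_0=2\B^*_1=4\B^*_2$ is in hand, the odd-$j$ eigenvalues are $-\mathds{1}_{\B}(n/2)+2\Lambda_1(j)$ and the $j\in 4\N+2$ eigenvalues collapse to $\mathds{1}_{\B}(n/2)-2\mathds{1}_{\B}(n/4)+4\Lambda_2(j)$, so for $j\in 4\N+1$ one has $\gamma_{j+1}-\gamma_j=2\bigl(\mathds{1}_{\B}(n/2)-\mathds{1}_{\B}(n/4)+2\Lambda_2(j+1)-\Lambda_1(j)\bigr)$; the constancy of $\vartheta_2(\gamma_{j+1}-\gamma_j)$ from \Cref{lem::3.2} then forces $\Lambda_1(j)$ to have constant parity on $4\N+1$, and since $\sigma(d)=\pm1$ the parity of $\Lambda_1(j)$ is that of $\sum_{d\in\D_2}c_{n/(4d)}(j)$, so \Cref{lem::3.3} immediately gives $\D_2\subseteq\{n/4\}$. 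This is essentially the paper's argument; as written, your proposal stops short of a proof of this part.

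A secondary, repairable gap: in deriving $\B^*_1=2\B^*_2$ you assert that constancy of $\vartheta_2(\gamma_{j+1}-\gamma_j)$ forces the bracketed combination $\sum_{d'\in\B^*_1}c_{n/(2d')}(j/2)-\sum_{d\in\B^*_2}c_{n/(4d)}(j/2)$ to have constant parity, but each consecutive difference entangles this bracket with $\Lambda_1(j\pm1)$, whose parity you have deliberately not yet controlled, so the deduction is not immediate. It can be fixed: by \Cref{lem::3.77} all consecutive differences are even, so $m\geq 1$, and a sum of four consecutive differences of equal exact valuation $m$ has valuation at least $m+1\geq 2$; comparing $\gamma_{j+4}-\gamma_j$ for $j\in 4\N+2$ eliminates the $\Lambda$-terms modulo $4$ and yields the constant parity you need, after which your mod-$2$ application of \Cref{lem::3.3} (signs are irrelevant, and the value $1$ is excluded because $n/2\notin\B^*_1$, $n/4\notin\B^*_2$) does give $\B^*_1=2\B^*_2$. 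Note this is a different route from the paper, which instead passes to the half-order graph $\Gamma'=\IMCG_{n/2}((\B\setminus\B_0)/2,(\D\setminus\D_2)/2,\sigma)$, observes $\gamma_j=2\gamma'_{j/2}+\mathds{1}_{\B}(n/2)$ for $j\in 4\N+2$, and re-applies \Cref{lem::3.7}; your direct parity argument is a legitimate alternative, but both it and, crucially, the $\D_2$ step must actually be carried out before the proposal constitutes a proof.
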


\begin{proof}
	First of all, we can assume that $\B$ or $\D$ is empty set. By~\Cref{lem::3.91} and \ref{lem::3.10}, the result is obvious. Below we prove that the case of neither $\B$ nor $\D$ is the empty set.

Suppose that $\Gamma$ has PST. By~\Cref{lem::3.7} and~\ref{lem::3.77}, we have $\B_0=2(\B_1\setminus\{n/2\})$. Combining this and~\Cref{cor::2.7} gives that the eigenvalues of $\Gamma$ are
   	\begin{equation*}
   		\begin{aligned}
   			\gamma_j & =\sum_{d\in 2(\B_1\setminus\{n/2\})} c_{n/d}(j/2)+\sum_{d\in \B_1} c_{n/(2d)}(j/2)-2\sum_{d\in \B_2} c_{n/(4d)}(j/2)+4 \Lambda_2(j)                                 \\
   			         & =\sum_{d'\in \B_1\setminus\{n/2\}} c_{n/(2d')}(j/2)+\sum_{d\in \B_1} c_{n/(2d)}(j/2)-2\sum_{d\in \B_2} c_{n/(4d)}(j/2)+4 \Lambda_2(j)                               \\
   			         & =2\sum_{d'\in \B_1\setminus\{n/2\}} c_{n/(2d')}(j/2)+\mathds{1}_{\B}(n/2)-2\sum_{d\in \B_2\setminus \{n/4\}} c_{n/(4d)}(j/2)-2\mathds{1}_{\B}(n/4)+4\Lambda_2(j),
   		\end{aligned}
	\end{equation*}
for $j\in 4\N+2$. Let $\Gamma'=\IMCG_{n/2}(\B',\D',\sigma)$ be an integral mixed circulant graph with $n'=n/2$, $\B'=(\B\setminus\B_0)/2$, and $\D'=(\D\setminus\D_2)/2$. Let $\gamma'_j$ be the eigenvalues of the integral mixed circulant graph $\Gamma'$. Then we have
   	\begin{equation}\label{eq::18}
   		\begin{aligned}
   			\gamma_j &=2\sum_{d'\in \B'_0} c_{n'/d'}(j/2)-2\sum_{d\in \B'_1\setminus \{n'/2\}} c_{n'/(2d)}(j/2)-2\mathds{1}_{\B'}(n'/2)+4\Lambda_1(j/2)+\mathds{1}_{\B}(n/2)\\
   &=2\gamma'_{j/2}+\mathds{1}_{\B}(n/2),
   		\end{aligned}
	\end{equation}
for $j\in 4\N+2$.
Therefore, eigenvalues of $\Gamma$ and $\Gamma'$ are coincide at odd positions. This means that $\gamma'_{j/2}$ have the same parity for $j/2\in 2\N+1$. By~\Cref{lem::3.7}, we have $\B'_0=2(\B'_1\setminus\{n'/2\})$, that is, $\B_1=2(\B_2\setminus\{n/4\})$. Hence, we have
\begin{equation}\label{eq:18}
\gamma_j =\mathds{1}_{\B}(n/2)-2\mathds{1}_{\B}(n/4)+4\Lambda_2(j),
\end{equation}
for $j\in 4\N+2$. According to \Cref{eq::22} and \Cref{eq:18}, we have
\[
\gamma_{j+1}-\gamma_j=2(-\mathds{1}_{\B}(n/4)+2\Lambda_2(j+1)+\mathds{1}_{\B}(n/2)-\Lambda_1(j)).
\]
for $j\in 4\N+1$. By~\Cref{lem::3.2}, $\Lambda_1(j)$ have the same parity for $j\in 4\N+1$. By~\Cref{lem::3.3}, we can obtain that $\D_2\setminus\{n/4\}=\emptyset$.
\end{proof}

For the sake of brevity, let $\Delta(j)=\sum_{d\in \B_2\setminus\{n/4\}} c_{n/(4d)}(j')+\sum_{d\in \B_3} {(-1)}^{j/4}c_{n/(8d)}(j')+2\Lambda_3(j)$ for $j\in 4\N$ and $j'=j/2^{\vartheta_2(j)}$. Similar to the proof of \Cref{lem::3.7}, we can obtain \Cref{lem::3.8}.
\begin{lem}\label{lem::3.8}
For $j\in 4\N$, $\Delta(j)$ have the same parity if and only if $\B^*_2=2\B^*_3$, where $\B^*_2=\B_2\setminus \{n/4\}$ and $\B^*_3=\B_3\setminus \{n/8\}$.
\end{lem}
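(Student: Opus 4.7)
The plan is to mimic the proof of \Cref{lem::3.7} by reducing everything modulo $2$ and invoking \Cref{lem::3.3}. First I would observe that every summand of $2\Lambda_3(j)$ carries a factor $2\cdot 2^{i-4}$ with $i\geq 4$, so $2\Lambda_3(j)\equiv 0\pmod 2$ for all $j\in 4\N$. Since $(-1)^{j/4}\equiv 1\pmod 2$, the definition of $\Delta(j)$ reduces modulo $2$ to
\begin{equation*}
\Delta(j)\equiv \sum_{d\in \B^*_2} c_{n/(4d)}(j') + \sum_{d\in \B_3} c_{n/(8d)}(j') \pmod 2,
\end{equation*}
where $j'=j/2^{\vartheta_2(j)}$ is odd; in particular, $\Delta(j)\bmod 2$ depends on $j$ only through $j'$, which runs over all odd positive integers as $j$ ranges over $4\N$.

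For the $(\Leftarrow)$ direction, I would assume $\B^*_2=2\B^*_3$. The map $d'\mapsto 2d'$ is then a bijection $\B^*_3\to\B^*_2$ satisfying $c_{n/(4\cdot 2d')}(j')=c_{n/(8d')}(j')$; splitting off the term $d=n/8$ from the sum over $\B_3$ and pairing the two main sums gives
\begin{equation*}
\Delta(j)=\bigl(1+(-1)^{j/4}\bigr)\sum_{d\in\B^*_3} c_{n/(8d)}(j') + (-1)^{j/4}\mathds{1}_\B(n/8) + 2\Lambda_3(j).
\end{equation*}
Since $1+(-1)^{j/4}\in\{0,2\}$, this yields $\Delta(j)\equiv \mathds{1}_\B(n/8)\pmod 2$, independent of $j$.

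For the $(\Rightarrow)$ direction, I would argue by contradiction as in \Cref{lem::3.7}. Form the multiset $M$ whose elements are $n/(4d)$ for $d\in\B^*_2$ and $n/(8d)$ for $d\in\B_3$, all of which are odd positive integers. Cancelling equal pairs modulo $2$, let $M^*$ be the set of values of odd multiplicity in $M$, so that $\Delta(j)\equiv\sum_{m\in M^*}c_m(j')\pmod 2$. If $\B^*_2\neq 2\B^*_3$, then there exists a divisor $d$ of $n$ with $\vartheta_2(n/d)=2$ and $d\neq n/4$ for which $\mathds{1}_{\B^*_2}(d)\neq \mathds{1}_{\B_3}(d/2)$; the value $m=n/(4d)>1$ then lies in $M^*$, so $M^*\not\subseteq\{1\}$, and \Cref{lem::3.3} forces the reduced sum, hence $\Delta(j)$, not to have constant parity as $j'$ varies over odd positive integers, the desired contradiction. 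The main obstacle will be the multiplicity bookkeeping in this last step: since $m=1$ arises in $M$ only from $d=n/8\in\B_3$ and never from $\B^*_2$ (which excludes $n/4$), one must track this asymmetric contribution carefully to ensure that \Cref{lem::3.3} yields precisely $\B^*_2=2\B^*_3$, without imposing any spurious constraint on whether $n/8\in\B$.
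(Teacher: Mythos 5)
Your proposal is correct and takes essentially the same route the paper intends: the paper's own justification of \Cref{lem::3.8} is only the remark that it is ``similar to the proof of \Cref{lem::3.7}'', i.e.\ reduce $\Delta(j)$ modulo $2$ (the factor $2\Lambda_3(j)$ and the sign $(-1)^{j/4}$ being irrelevant there), split off the contribution of $n/8$, and invoke \Cref{lem::3.3} for the forward direction. Your explicit multiset bookkeeping in the $(\Rightarrow)$ step, including the observation that the modulus $1$ can only come from $n/8\in\B_3$, merely fills in details the paper leaves implicit and does not change the argument.
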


By~\Cref{lem::3.11}, we re-characterize the eigenvalues of integral mixed circulant graph.

\begin{lem}\label{lem::3.14}
	Let $\Gamma=\IMCG_n(\B,\D,\sigma)$ be an integral mixed circulant graph. If $n\in 4\N$, $\D_2\setminus\{n/4\}=\emptyset$, $\B_0=2\B^*_1=4\B^*_2$, where $\B^*_2=\B_2\setminus \{n/4\}$ and $\B^*_1=\B_1\setminus \{n/2\}$, then the eigenvalues of $\Gamma$ are
\begin{equation*}
    \gamma_j=
    \begin{cases}
    	\mathds{1}_{\B}(n/2)+2\mathds{1}_{\B}(n/4)+4\Delta(j),     & \text{if $j\in 4\N$,}   \\
    	-\mathds{1}_{\B}(n/2)-2\mathds{1}_{\D}(n/4)\sigma(n/4),    & \text{if $j\in 4\N+1$,} \\
    	\mathds{1}_{\B}(n/2)-2\mathds{1}_{\B}(n/4)+4 \Lambda_2(j), & \text{if $j\in 4\N+2$,} \\
    	-\mathds{1}_{\B}(n/2)+2\mathds{1}_{\D}(n/4)\sigma(n/4),    & \text{if $j\in 4\N+3$,}
    \end{cases}
\end{equation*}
where $\Delta(j)=\sum\limits_{d\in \B_2\setminus\{n/4\}} c_{n/(4d)}(j')+\sum\limits_{d\in \B_3} {(-1)}^{j/4}c_{n/(8d)}(j')+2\Lambda_3(j)$ for $j'=j/2^{\vartheta_2(j)}$.
\end{lem}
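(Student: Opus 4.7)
The strategy is a direct case analysis on $j \bmod 4$, starting from \Cref{cor::2.7} and simplifying the sums using the hypotheses $\B_0=2\B^*_1=4\B^*_2$ and $\D_2\setminus\{n/4\}=\emptyset$. The hypothesis on $\D_2$ forces $\D_2\subseteq\{n/4\}$, so the sum defining $\Lambda_1(j)$ is either empty or collapses to the single term with $d=n/4$; in that case $n/(4d)=1$ and $c_1(j)=1$, yielding $\Lambda_1(j)=\mathds{1}_{\D}(n/4)\sigma(n/4)(-1)^{(j+1)/2}$. For $j\in 4\N+1$ one has $(-1)^{(j+1)/2}=-1$, and for $j\in 4\N+3$ one has $(-1)^{(j+1)/2}=+1$, producing the $\mp 2\mathds{1}_{\D}(n/4)\sigma(n/4)$ contributions that appear in the stated formulas for odd $j$.

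The key computational step is a telescoping of the sums indexed by $\B_0,\B_1,\B_2,\B_3$ using the doubling relations. For odd $j$, the reindexing $d=2d'$ rewrites $\sum_{d\in\B_0}c_{n/d}(j)$ as $\sum_{d'\in\B^*_1}c_{n/(2d')}(j)$, which cancels the $\B^*_1$-portion of $\sum_{d\in\B_1}c_{n/(2d)}(j)$; only the $d=n/2$ term survives, and since $c_1(j)=1$ it contributes the residue $-\mathds{1}_{\B}(n/2)$. For $j\in 4\N+2$ and $j\in 4\N$ one iterates this trick: after matching $\B_0$ against $\B^*_1$, one uses $\B^*_1=2\B^*_2$ to match the resulting quantity against $\B^*_2$, leaving only the boundary contributions from $\{n/2\}\cap\B$ and $\{n/4\}\cap\B$, each of which again degenerates to $c_1=1$. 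This gives $\mathds{1}_{\B}(n/2)-2\mathds{1}_{\B}(n/4)+4\Lambda_2(j)$ in the case $j\in 4\N+2$; for $j\in 4\N$ the unmodified $\B_3$ term and $\Lambda_3(j)$ are repackaged into the single expression $4\Delta(j)$.

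The main difficulty is bookkeeping rather than anything conceptual: one must track the three distinct arguments of the Ramanujan sums ($j$, $j/2$, and $j'=j/2^{\vartheta_2(j)}$) appearing in the three cases of \Cref{cor::2.7}, the parity-dependent signs inside $\Lambda_1(j)$, and the fact that the top-of-tower elements $n/2\in\B_1$ and $n/4\in\B_2$ are excluded from $\B^*_1$ and $\B^*_2$ and hence contribute non-cancelling constants under the telescoping. No further simplification of $\Lambda_2$ or $\Lambda_3$ is needed, since the hypotheses on $\B$ and $\D_2$ only constrain the lower levels $\B_0,\B_1,\B_2$ and $\D_2$; these terms are inherited verbatim from \Cref{cor::2.7} and appear unchanged in the final formulas.
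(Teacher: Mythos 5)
Your proposal is correct and follows essentially the same route as the paper's proof: a case analysis on $j \bmod 4$ starting from \Cref{cor::2.7}, collapsing $\Lambda_1(j)$ via $\D_2\subseteq\{n/4\}$ (with the sign $(-1)^{(j+1)/2}$ giving the $\mp 2\mathds{1}_{\D}(n/4)\sigma(n/4)$ terms), and telescoping the $\B_0,\B_1,\B_2$ sums via $\B_0=2\B^*_1=4\B^*_2$ so that only the excluded elements $n/2$ and $n/4$ contribute constants $c_1=1$. One small clarification: for $j\in 4\N$ the $+2$ sign in \Cref{cor::2.7}\textbf{(iii)} makes the reindexed $\B^*_2$ sums add up to $4\sum_{d\in\B^*_2}c_{n/(4d)}(j')$ rather than cancel, so besides the boundary terms this surviving sum is also absorbed into $4\Delta(j)$, exactly as in the paper's computation.
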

\begin{proof}
	From the known conditions and~\Cref{cor::2.7}, we can derive some results as follows. For $j\in 4\N+1$, we have
	\begin{equation*}
           		\begin{aligned}
           			\gamma_j & =\sum_{d\in \B_0} c_{n/d}(j)-\sum_{d\in \B_1} c_{n/(2d)}(j)+2\Lambda_1(j)                                      \\
           			         & =\sum_{d\in \B_0} c_{n/d}(j)-\sum_{d\in \B_1\setminus\{n/2\}} c_{n/(2d)}(j)-\mathds{1}_{\B}(n/2)+2\Lambda_1(j) \\
           			         & =-\mathds{1}_{\B}(n/2)-2\mathds{1}_{\D}(n/4)\sigma(n/4).
           		\end{aligned}
	\end{equation*}
Similarly, for $j\in 4\N+3$, we have $\gamma_j=-\mathds{1}_{\B}(n/2)+2\mathds{1}_{\D}(n/4)\sigma(n/4)$.
For $j\in 4\N+2$, we have
	\begin{equation*}
           		\begin{aligned}
           			\gamma_j & =\sum_{d\in \B_0} c_{n/d}(j/2)+\sum_{d\in \B_1} c_{n/(2d)}(j/2)-2\sum_{d\in \B_2} c_{n/(4d)}(j/2)+4 \Lambda_2(j)                                                                            \\
           			         & =\sum_{d\in \B_0} c_{n/d}(j/2)+\sum_{d\in \B_1\setminus\{n/2\}} c_{n/(2d)}(j/2)+\mathds{1}_{\B}(n/2)-2\sum_{d\in \B_2\setminus\{n/4\}} c_{n/(4d)}(j/2)-2\mathds{1}_{\B}(n/4)+4 \Lambda_2(j) \\
           			         & =\mathds{1}_{\B}(n/2)-2\mathds{1}_{\B}(n/4)+4 \Lambda_2(j).
           		\end{aligned}
	\end{equation*}
For $j\in 4\N$, we have
	\begin{equation*}
           		\begin{aligned}
           			\gamma_j & =\sum_{d\in \B_0} c_{n/d}(j')+\sum_{d\in \B_1} c_{n/(2d)}(j')+2\sum_{d\in \B_2} c_{n/(4d)}(j')+4\sum_{d\in \B_3} {(-1)}^{j/4}c_{n/(8d)}(j')+8\Lambda_3(j) \\
           			         & =\mathds{1}_{\B}(n/2)+2\mathds{1}_{\B}(n/4)+4\sum_{d\in \B_2\setminus\{n/4\}} c_{n/(4d)}(j')+4\sum_{d\in \B_3} {(-1)}^{j/4}c_{n/(8d)}(j')+8\Lambda_3(j) \\
           			         & =\mathds{1}_{\B}(n/2)+2\mathds{1}_{\B}(n/4)+4\Delta(j),
           		\end{aligned}
	\end{equation*}
where $\Delta(j)=\sum\limits_{d\in \B_2\setminus\{n/4\}} c_{n/(4d)}(j')+\sum\limits_{d\in \B_3} {(-1)}^{j/4}c_{n/(8d)}(j')+2\Lambda_3(j)$ for $j'=j/2^{\vartheta_2(j)}$.
\end{proof}

\begin{lem}\label{lem::3.13}
	Let $\Gamma=\IMCG_n(\B,\D,\sigma)$ be an integral mixed circulant graph. If $n\in 4\N$, $\D_2\setminus\{n/4\}=\emptyset$, $\B_0=2\B^*_1=4\B^*_2$, where $\B^*_2=\B_2\setminus \{n/4\}$ and $\B^*_1=\B_1\setminus \{n/2\}$, then we have
\begin{equation*}
    \gamma_{j+1}-\gamma_j=
    \begin{cases}
    	-2\mathds{1}_{\B}(n/2)-2\mathds{1}_{\D}(n/4)\sigma(n/4)-2\mathds{1}_{\B}(n/4)-4\Delta(j),       & \text{if $j\in 4\N$,}   \\
    	2\mathds{1}_{\B}(n/2)-2\mathds{1}_{\B}(n/4)+2\mathds{1}_{\D}(n/4)\sigma(n/4)+4 \Lambda_2(j+1)	, & \text{if $j\in 4\N+1$,} \\
    	-2\mathds{1}_{\B}(n/2)+2\mathds{1}_{\D}(n/4)\sigma(n/4)+2\mathds{1}_{\B}(n/4)-4 \Lambda_2(j)	,  & \text{if $j\in 4\N+2$,} \\
    	2\mathds{1}_{\B}(n/2)-2\mathds{1}_{\D}(n/4)\sigma(n/4)+2\mathds{1}_{\B}(n/4)+4\Delta(j+1),      & \text{if $j\in 4\N+3$.}
    \end{cases}
\end{equation*}
where $\Delta(j)=\sum\limits_{d\in \B_2\setminus\{n/4\}} c_{n/(4d)}(j')+\sum\limits_{d\in \B_3} {(-1)}^{j/4}c_{n/(8d)}(j')+2\Lambda_3(j)$ for $j'=j/2^{\vartheta_2(j)}$.
\end{lem}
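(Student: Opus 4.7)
The plan is to obtain Lemma~3.13 as a direct corollary of Lemma~3.14. Since the hypotheses of Lemma~3.13 are exactly those under which Lemma~3.14 gives a four-case closed form for $\gamma_j$ (depending only on $j\bmod 4$), the strategy is simply to subtract consecutive expressions from that table.

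Concretely, I would split into four sub-cases according to the residue of $j$ modulo $4$. For $j\in 4\N$, $\gamma_j$ is the $4\N$-row of Lemma~3.14, while $\gamma_{j+1}$ is the $4\N+1$-row, so
\begin{equation*}
\gamma_{j+1}-\gamma_j=\bigl(-\mathds{1}_{\B}(n/2)-2\mathds{1}_{\D}(n/4)\sigma(n/4)\bigr)-\bigl(\mathds{1}_{\B}(n/2)+2\mathds{1}_{\B}(n/4)+4\Delta(j)\bigr),
\end{equation*}
which collects to the first claimed formula. The remaining three cases $j\in 4\N+1,\,4\N+2,\,4\N+3$ are handled identically by pairing the appropriate consecutive rows of the Lemma~3.14 table and simplifying; the cross terms $\mathds{1}_{\B}(n/2)$, $\mathds{1}_{\B}(n/4)$, and $\mathds{1}_{\D}(n/4)\sigma(n/4)$ switch signs in the predictable alternating pattern, while the $\Delta$-term appears only in the rows where $j$ or $j+1$ is divisible by $4$, and the $\Lambda_2$-term appears only in the rows where $j$ or $j+1$ lies in $4\N+2$.

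There is essentially no obstacle beyond bookkeeping: the hypotheses are strong enough that every Ramanujan-sum contribution in $\gamma_j$ for odd $j$ and every contribution from $\B_0\cup\B_1\cup\B_2$ for $j\in 4\N+2$ has already been absorbed in deriving Lemma~3.14. The only care needed is (i) to track signs correctly when transitioning between the $4\N+1$ and $4\N+3$ rows, whose $\mathds{1}_{\D}(n/4)\sigma(n/4)$ terms differ in sign, and (ii) to ensure that the $\Delta(j)$ and $\Lambda_2(j)$ expressions are indexed at the correct argument ($j$ versus $j+1$) in each case, matching the statement of the lemma.
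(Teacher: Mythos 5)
Your proposal is correct and is exactly what the paper intends: the paper states Lemma~\ref{lem::3.13} without proof, as an immediate consequence of subtracting consecutive rows of the four-case eigenvalue table in Lemma~\ref{lem::3.14}, and your case-by-case subtraction (including the sign pattern of the $\mathds{1}_{\B}(n/2)$, $\mathds{1}_{\B}(n/4)$, $\mathds{1}_{\D}(n/4)\sigma(n/4)$ terms and the correct indexing of $\Delta$ and $\Lambda_2$ at $j$ versus $j+1$) reproduces all four claimed formulas.
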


\begin{proof}[\upshape\bfseries Proof of \Cref{thm::1.2}]
Assume that $n\in 4\N$, $\D_2\setminus\{n/4\}=\emptyset$, $\B_0=2\B^*_1=4\B^*_2$, where $\B^*_2=\B_2\setminus \{n/4\}$ and $\B^*_1=\B_1\setminus \{n/2\}$. We prove that the following three cases.

\begin{description}
\item[Cases 1] $\D_2=\{n/4\}$ and $n/2\notin \B$.

Assume that $\D_2=\{n/4\}$. By~\Cref{lem::3.13}, we have
\begin{equation*}
  \gamma_{j+1}-\gamma_j=
    \begin{cases}
		4\Lambda_2(j+1)+2\mathds{1}_{\B}(n/2)+2\sigma(n/4)	, & \text{if $j\in 4\N+1$,} \\
		-4\Lambda_2(j)-2\mathds{1}_{\B}(n/2)+2\sigma(n/4)	, & \text{if $j\in 4\N+2$.}
    \end{cases}
\end{equation*}
$(\Rightarrow)$ (Proof by contradiction) Suppose that $n/2\in \B$. For $j\in 4\N+1$, we have
\begin{equation}\label{eq::23}
  \gamma_{j+1}-\gamma_j=
    \begin{cases}
    	4(\Lambda_2(j+1)+1)	, & \text{if $\sigma(n/4)=1$,}  \\
    	4\Lambda_2(j+1)	,     & \text{if $\sigma(n/4)=-1$.}
    \end{cases}
\end{equation}
For $j\in 4\N+2$, we have
\begin{equation}\label{eq::24}
  \gamma_{j+1}-\gamma_j=
    \begin{cases}
    	-4\Lambda_2(j)	,     & \text{if $\sigma(n/4)=1$,}  \\
    	-4(\Lambda_2(j)+1)	, & \text{if $\sigma(n/4)=-1$.}
    \end{cases}
\end{equation}

From~\Cref{eq::23} and \Cref{eq::24}, we can see that $\Lambda_2(j)+1$ and $\Lambda_2(j)$ have no the same parity for $j\in 4\N+2$, namely, \Cref{eq:a} does not hold. By~\Cref{lem::3.2}, $\Gamma$ has no $\PST$.

$(\Leftarrow)$ If $n/2\notin \B$, by~\Cref{lem::3.13}, then
\begin{equation*}
  \gamma_{j+1}-\gamma_j=
    \begin{cases}
    	-2\sigma(n/4)-4\Delta(j),     & \text{if $j\in 4\N$,}   \\
    	2\sigma(n/4)+4\Lambda_2(j+1), & \text{if $j\in 4\N+1$,} \\
    	2\sigma(n/4)-4\Lambda_2(j),   & \text{if $j\in 4\N+2$,} \\
    	-2\sigma(n/4)+4\Delta(j+1),   & \text{if $j\in 4\N+3$.}
    \end{cases}
\end{equation*}
Therefore, $\vartheta_2(\gamma_{j+1}-\gamma_j)=1$ for all
	$j=0,1,\ldots, n-1$, namely, $\Gamma$ has $\PST$.
\item[Cases 2] $\D_2=\emptyset$, either $n/2\in \B$ or $n/4\in \B$.

For $\{n/4,n/2\}\subseteq \B$, consider in {\bf Cases 3}.

$(\Rightarrow)$ (Proof by contradiction) Suppose that $n/2\notin \B$ and $n/4\notin \B$. If $\D_2=\emptyset$, by~\Cref{lem::3.13}, then
\begin{equation}\label{eq::25}
    \gamma_{j+1}-\gamma_j=
    \begin{cases}
    	-4\Delta(j),      & \text{if $j\in 4\N$,}   \\
    	4 \Lambda_2(j+1), & \text{if $j\in 4\N+1$,} \\
    	-4 \Lambda_2(j),  & \text{if $j\in 4\N+2$,} \\
    	4\Delta(j+1),     & \text{if $j\in 4\N+3$.}
    \end{cases}
\end{equation}
If $\Gamma$ has $\PST$, by~\Cref{lem::3.2} and~\Cref{eq::25}, then $\Lambda_2(j)$ and $\Delta(j)$ have the same parity for $j\in 4\N+2$ and $j\in 4\N$. By~\Cref{lem::3.3}, we have $\D_3=\{n/8\}$, meanwhile, $\Lambda_2(j)\in 2\N+1$ and $n/8\notin \B$. By~\Cref{lem::3.8}, we have  $\B_2\setminus\{n/4\}=2\B_3\setminus\{n/8\}$. Now we have $\Delta(j)=\sum_{d\in \B_3\setminus\{n/8\}} (1+{(-1)}^{j/4})c_{n/(8d)}(j')+2\Lambda_3(j)\in 2\N$ for $j'=j/2^{\vartheta_2(j)}$, a contradiction.

$(\Leftarrow)$ If $\D_2=\emptyset$ and $n/2\in \B$ and $n/4\notin \B$, by~\Cref{lem::3.13}, then
\begin{equation}\label{eq::26}
    \gamma_{j+1}-\gamma_j=
    \begin{cases}
    	-2-4\Delta(j),      & \text{if $j\in 4\N$,}   \\
    	2+4 \Lambda_2(j+1), & \text{if $j\in 4\N+1$,} \\
    	-2-4 \Lambda_2(j),  & \text{if $j\in 4\N+2$,} \\
    	2+4\Delta(j+1),     & \text{if $j\in 4\N+3$.}
    \end{cases}
\end{equation}
Therefore, $\vartheta_2(\gamma_{j+1}-\gamma_j)=1$ for all
	$j=0,1,\ldots, n-1$, namely, $\Gamma$ has $\PST$. Similarly, if $\D_2=\emptyset$ and $n/2\notin \B$ and $n/4\in \B$, then $\Gamma$ has $\PST$.

\item[Cases 3] $\D_2=\emptyset$, $\D_3=\{n/8\}$, $\{n/4,n/2\}\subseteq \B$, $\B^*_2=2\B^*_3$, where $\B^*_3=\B_3\setminus \{n/8\}$.

Assume that $\D_2=\emptyset$ and $\{n/4,n/2\}\subseteq \B$. By~\Cref{lem::3.13}, we have
\begin{equation}\label{eq::27}
    \gamma_{j+1}-\gamma_j=
    \begin{cases}
    	-4-4\Delta(j),    & \text{if $j\in 4\N$,}   \\
    	4 \Lambda_2(j+1), & \text{if $j\in 4\N+1$,} \\
    	-4 \Lambda_2(j),  & \text{if $j\in 4\N+2$,} \\
    	4+4\Delta(j+1),   & \text{if $j\in 4\N+3$.}
    \end{cases}
\end{equation}
$(\Rightarrow)$ (Proof by contradiction) Suppose that $\D_3\neq\{n/8\}$ or $\B_2\setminus\{n/4\}\neq 2\B_3\setminus\{n/8\}$. Then $\Lambda_2(j)$ or $\Delta(j)$ have no the same parity for $j\in 4\N+2$ or $j\in 4\N$. By~\Cref{lem::3.2}, $\Gamma$ has no $\PST$, a contradiction.

$(\Leftarrow)$ If $\D_3=\{n/8\}$ and $\B_2\setminus\{n/4\}= 2\B_3\setminus\{n/8\}$, then $\Lambda_2(j)\in 2\N+1$ and $\Delta(j)+1\in 2\N+1$. Therefore, $\vartheta_2(\gamma_{j+1}-\gamma_j)=2$ for all
$j=0,1,\ldots, n-1$, namely, $\Gamma$ has $\PST$.
\end{description}

\end{proof}

\begin{figure}[htbp!]
  \centering
  \subfigure[$\IMCG_8(\{1\},\{2\},\{-1\})$]{
		\begin{minipage}[b]{0.3\textwidth}
\centering
\begin{tikzpicture}
\foreach \x in{0,1,...,7}
{
\node[circle,fill=gray!25,draw,inner sep=0pt,minimum size=4mm] (a\x) at (-\x*45:2){\x};
}
\foreach \x in {0,1,...,7}
{
\pgfmathparse{int(mod(\x+6,8))}
\draw[-stealth,red] (a\x) -- (a\pgfmathresult);
}
\foreach \x in {0,1,...,7}
{
\foreach[parse] \y in{\x+1,\x+3,\x+5,\x+7}
{
\pgfmathparse{int(mod(\y,8))}
\draw[blue] (a\x) -- (a\pgfmathresult);
}
}
\end{tikzpicture}
\end{minipage}
		\label{fig::2.1}
}
    \subfigure[$\IMCG_8(\{2\},\{1\},\{1\})$]{
    	\begin{minipage}[b]{0.3\textwidth}
    \centering
\begin{tikzpicture}
    \foreach \x in{0,1,...,7}
{
\node[circle,fill=gray!25,draw,inner sep=0pt,minimum size=4mm] (a\x) at (-\x*45:2){\x};
}
\foreach \x in {0,1,...,7}
{
\pgfmathparse{int(mod(\x+1,8))}
\draw[-stealth,red] (a\x) -- (a\pgfmathresult);
}
\foreach \x in {0,1,...,7}
{
\pgfmathparse{int(mod(\x+5,8))}
\draw[-stealth,red] (a\x) -- (a\pgfmathresult);
}
\foreach \x in {0,1,...,7}
{
\foreach[parse] \y in{\x+2,\x+6}
{
\pgfmathparse{int(mod(\y,8))}
\draw[blue] (a\x) -- (a\pgfmathresult);
}
}
\end{tikzpicture}
\end{minipage}
		\label{fig::2.2}
}
   \subfigure[$\IMCG_8(\{2,4\},\{1\},\{-1\})$]{
    	\begin{minipage}[b]{0.3\textwidth}
    \centering
\begin{tikzpicture}
    \foreach \x in{0,1,...,7}
{
\node[circle,fill=gray!25,draw,inner sep=0pt,minimum size=4mm] (a\x) at (-\x*45:2){\x};
}
\foreach \x in {0,1,...,7}
{
\pgfmathparse{int(mod(\x+3,8))}
\draw[-stealth,red] (a\x) -- (a\pgfmathresult);
}
\foreach \x in {0,1,...,7}
{
\pgfmathparse{int(mod(\x+7,8))}
\draw[-stealth,red] (a\x) -- (a\pgfmathresult);
}
\foreach \x in {0,1,...,7}
{
\foreach[parse] \y in{\x+2,\x+4,\x+6}
{
\pgfmathparse{int(mod(\y,8))}
\draw[blue] (a\x) -- (a\pgfmathresult);
}
}

\end{tikzpicture}
      	\end{minipage}
	\label{fig::2.3}
    }
	\caption{Three integral mixed circulant graph having $\PST$}
	\label{fig::2}
\end{figure}
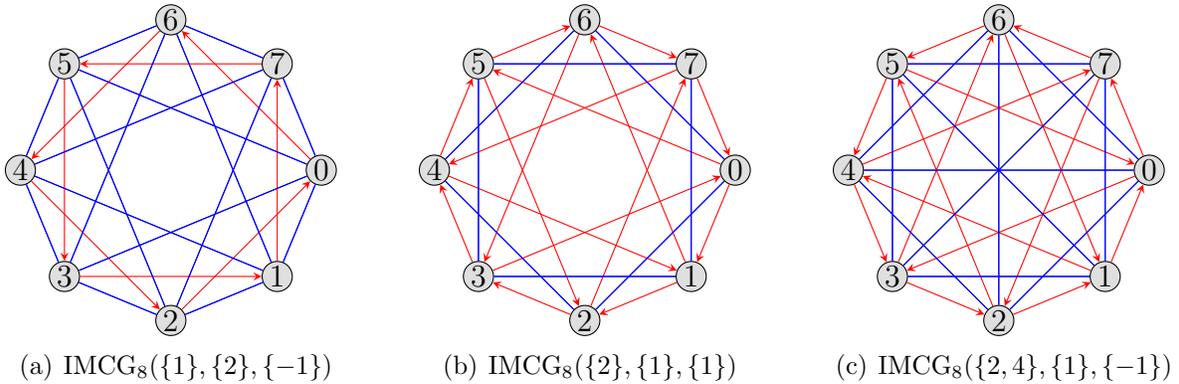

\begin{ex}
We give three examples of the existence of PST for each of the three cases in~\Cref{thm::1.2}, and shown in~Fig.~\ref{fig::2}. Let $\Gamma=\IMCG_n(\B,\D,\sigma)$ be an integral mixed circulant graph.
	\begin{enumerate}[label = \bf(\roman*)]
		\item If $G(\mathbb{Z}_8,\C)=\IMCG_8(\{1\},\{2\},\{-1\})$, then $\C=G_8(1)\cup G_8^{3}(2)=\{1,3,5,6,7\}$ with $\overline{\C}=\{6\}$; (see~Fig.~\ref{fig::2.1})
        \item If $G(\mathbb{Z}_8,\C)=\IMCG_8(\{2\},\{1\},\{1\})$, then $\C=G_8(2)\cup G_8^{1}(1)=\{1,2,5,6\}$ with $\overline{\C}=\{1,5\}$; (see~Fig.~\ref{fig::2.2})
        \item If $G(\mathbb{Z}_8,\C)=\IMCG_8(\{2,4\},\{1\},\{-1\})$, then $\C=G_8(2)\cup G_8(4)\cup G_8^{3}(1)=\{2,3,4,6,7\}$ with $\overline{\C}=\{3,7\}$. (see~Fig.~\ref{fig::2.3})
    \end{enumerate}
\end{ex}

\section{Proof of \texorpdfstring{\Cref{thm::1.3}}{Theorem 1.3}}\label{sec::4}

In~\cite{S22}, Song proved that there exists $\MST$ on integral oriented circulant graphs between vertices $b$, $b+n/4$, $b+n/2$, $b+3n/4$ for all $b \in \Z_n$. We can extend some lemmas to integral mixed circulant graph. By~\Cref{thm::1.2} and~\Cref{lem::3.14}, we have $\mu_{j+4}-\mu_{j}=0$, for each $j$ is odd. According to~\cite{S22}, we can obtain that the following lemma.
\begin{lem}{\upshape\bfseries (See~\cite[Theorem 4.1]{S22})}
	Let $\Gamma=\IMCG_n(\B,\D,\sigma)$ be an integral mixed circulant graph. For any two distinct vertices $a,b\in \mathbb{Z}_n$, if $\Gamma$ has $\PST$ between $a$ and $b$, then $a=b+kn/4$ for some $k\in\{1,2,3\}$.
\end{lem}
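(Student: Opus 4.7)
The proof hinges on combining Corollary \ref{cor::4.1} at step size $k=4$ with the observation that, once the structural consequences of PST are extracted, the eigenvalues at odd indices become $4$-periodic. My plan is therefore in two stages: first extract those structural conditions from the assumption of PST, then execute a short arithmetic computation.

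First I would assume that $\Gamma$ has PST between distinct $a, b \in \Z_n$ and invoke Lemma \ref{lem::3.11} to conclude $n \in 4\N$, $\D_2 \setminus \{n/4\} = \emptyset$, and $\B_0 = 2\B_1^* = 4\B_2^*$. (The proofs of Lemmas \ref{lem::3.7} and \ref{lem::3.77} rest only on constancy of $\vartheta_2(\gamma_{j+1}-\gamma_j)$, which follows from the Theorem \ref{thm::3.1} PST condition applied to the generic pair $a,b$ by clearing denominators and comparing $2$-adic valuations; hence they apply to arbitrary PST and not merely to the pair $b, b+n/2$ singled out in Theorem \ref{thm::1.2}.) With these conditions in hand, Lemma \ref{lem::3.14} provides explicit eigenvalue formulas: for $j \in 4\N+1$ one has $\gamma_j = -\mathds{1}_\B(n/2) - 2\sigma(n/4)\mathds{1}_\D(n/4)$, and for $j \in 4\N+3$ one has $\gamma_j = -\mathds{1}_\B(n/2) + 2\sigma(n/4)\mathds{1}_\D(n/4)$. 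Both expressions depend on $j$ only through $j \bmod 4$, so $\gamma_{j+4} = \gamma_j$ for every odd $j$.

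In the second stage I apply Corollary \ref{cor::4.1} with $k = 4$: PST between $a$ and $b$ supplies coprime integers $p, q$ such that $\frac{p}{q}(\gamma_{j+4}-\gamma_j) + \frac{4(a-b)}{n} \in \Z$ for every $j \in \{0,1,\ldots,n-1\}$. Choosing any odd $j$ kills the eigenvalue difference and forces $\frac{4(a-b)}{n} \in \Z$, i.e., $n \mid 4(a-b)$. Since $4 \mid n$, this yields $a - b \equiv 0 \pmod{n/4}$, so modulo $n$ we have $a - b \in \{n/4,\, n/2,\, 3n/4\}$ (the value $0$ being excluded by $a \neq b$). This is precisely the desired statement $a = b + kn/4$ for some $k \in \{1,2,3\}$.

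The hard part will be the bookkeeping in the first stage: confirming that the machinery of Lemma \ref{lem::3.11} genuinely applies to PST between an arbitrary pair and not only to $b, b+n/2$. If that extension is already implicit in its statement, the argument reduces to the one-line computation above; otherwise one must reprove the parity and structural conclusions by replacing Lemma \ref{lem::3.2} with Corollary \ref{cor::4.1} evaluated at the generic pair $a,b$, tracking carefully how the extra $\frac{a-b}{n}$ term interacts with the $2$-adic valuations of the consecutive eigenvalue gaps.
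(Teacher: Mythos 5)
Your proposal is correct and takes essentially the same route as the paper: the paper's own justification is only a sketch that observes (via \Cref{thm::1.2} and \Cref{lem::3.14}) that $\gamma_{j+4}-\gamma_j=0$ for odd $j$ and then defers to \cite[Theorem 4.1]{S22}, and the deferred step is precisely your application of \Cref{cor::4.1} with $k=4$ at an odd index, forcing $4(a-b)/n\in\Z$ and hence $a-b\in\{n/4,\,n/2,\,3n/4\}$. Your explicit reliance on \Cref{lem::3.11} (to get $n\in 4\N$ and the constancy of the odd-indexed eigenvalues) is the same mechanism the paper uses, merely spelled out rather than cited.
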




\begin{lem}{\upshape\bfseries (See~\cite[Theorem 4.2]{S22})}\label{thm::4.2}
	Let $\Gamma=\IMCG_n(\B,\D,\sigma)$ be an integral mixed circulant graph. Then for all $b \in \Z_n$, $\Gamma$ has $\PST$ between vertices $b+n/4$ and $b$ and between vertices $b+n/2$ and $b$ if and only if $\Gamma$ has $\MST$ between vertices $b$, $b+n/4$, $b+n/2$, $b+3n/4$.
\end{lem}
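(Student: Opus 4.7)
The backward direction is immediate from the definition of $\MST$: if $\MST$ holds on $C=\{b, b+n/4, b+n/2, b+3n/4\}$ for every $b\in\Z_n$, then $\PST$ occurs between every pair of vertices in $C$, and specializing to the pairs $\{b,b+n/4\}$ and $\{b,b+n/2\}$ yields the two stated transfers.

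For the forward direction, the plan is to exploit two structural features of $\IMCG_n(\B,\D,\sigma)$. First, because $H_\Gamma$ is a circulant matrix, so is $U(t)=\exp(\ii t H_\Gamma)$ for every $t\in\R$; equivalently, $U(t)$ commutes with the cyclic shift operator $T$ sending $\mathbf{e}_j$ to $\mathbf{e}_{j+1\bmod n}$. As a consequence (translation invariance), whenever $U(t)\mathbf{e}_a=\gamma\,\mathbf{e}_{a+c}$ for one vertex $a$, the same identity holds for every vertex with the same $t$ and the same unimodular $\gamma$. Second, $U(t)$ is unitary, so $U(-t)=U(t)^{*}$; hence $\PST$ from $a$ to $a'$ at time $t$ with phase $\gamma$ is equivalent to $\PST$ from $a'$ to $a$ at time $-t$ with phase $\overline{\gamma}$.

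With these tools in hand, I would fix $t_1,\gamma_1$ realizing the assumed $n/4$-transfer, and $t_2,\gamma_2$ realizing the assumed $n/2$-transfer, and then enumerate the $\binom{4}{2}=6$ pairs in $C$. Their differences modulo $n$ lie in $\{\pm n/4,\ n/2\}$. The three pairs whose difference is $n/4$ and the two pairs whose difference is $n/2$ are each handled immediately by applying translation invariance to the corresponding assumed transfer. The only remaining pair is $(b,b+3n/4)$, which I would treat by noting $3n/4\equiv -n/4\pmod n$: translation invariance first gives $U(t_1)\mathbf{e}_{b+3n/4}=\gamma_1\,\mathbf{e}_b$, and applying $U(t_1)^{*}$ then yields $U(-t_1)\mathbf{e}_b=\overline{\gamma_1}\,\mathbf{e}_{b+3n/4}$, i.e., $\PST$ from $b$ to $b+3n/4$. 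Combined with the previous five pairs this verifies $\PST$ between every pair of vertices in $C$, which is exactly $\MST$ on $C$.

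The main obstacle is almost entirely bookkeeping: confirming that the six pairs in $C$ are each covered either by a translate of the $n/4$-transfer, by a translate of the $n/2$-transfer, or by the unitarity-induced inverse of the $n/4$-transfer. No additional spectral analysis beyond the circulant form of $U(t)$ derived in Section~\ref{sec::2} is required, so the argument is structural rather than computational.
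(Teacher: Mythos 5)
Your proof is correct. Note first that the paper itself gives no proof of this lemma; it is imported verbatim from \cite[Theorem 4.2]{S22}, so there is no internal argument to compare against, and a self-contained justification like yours is exactly what is needed. Your two ingredients are the right ones: the backward direction is indeed immediate from the definition of $\MST$, and for the forward direction the circulant structure of $H_\Gamma$ (hence of $U(t)$) together with unitarity covers all six pairs of $C=\{b,b+n/4,b+n/2,b+3n/4\}$, whose differences are $\pm n/4$ and $n/2$. Two small remarks. First, since the hypothesis is quantified over \emph{all} $b\in\Z_n$, every pair of $C$ is already of the form $(b'+n/4,b')$ or $(b'+n/2,b')$ for a suitable base point $b'$ (e.g.\ the pair $(b,b+3n/4)$ is the pair $(b'+n/4,b')$ with $b'=b+3n/4$), so translation invariance and the unitarity reversal are only strictly needed if one reads the hypothesis pointwise in $b$; your argument handles that stronger (pointwise) reading, which is fine. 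Second, your reversal step produces the time $-t_1$; if one insists on positive times, integrality of the eigenvalues gives $U(2\pi)=I$, so periodicity converts $-t_1$ into $2\pi k-t_1>0$ — worth a sentence, though the paper's definition only requires $t\in\R$, so as written nothing is missing.
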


Suppose that $\Gamma$ has $\PST$ between vertices $b+n/2$ and $b$. By~\Cref{thm::4.2}, if we want to determine the existence of $\MST$ in $\Gamma$, then we only need to satisfy that $\Gamma$ has $\PST$ between vertices $b+n/4$ and $b$.

\begin{lem}{\upshape\bfseries (See~\cite[Theorem 4.3]{S22})}\label{thm::4.4}
	Let $\Gamma=\IMCG_n(\B,\D,\sigma)$ be an integral mixed circulant graph. Suppose that for all $b \in \Z_n$, $\Gamma$ has $\PST$ between vertices $b+n/2$ and $b$. Then for all $b \in \Z_n$, $\Gamma$ has $\PST$ between vertices $b+n/4$ and $b$ if and only if there exists a
	number $m' \in \N$ such that
	\begin{equation}\label{eq:b}
		\vartheta_2(\mu_{j+2}-\mu_j)=m',
	\end{equation}
	for all $j=0,1,\ldots, n-1$.
\end{lem}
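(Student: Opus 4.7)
The plan is to apply Corollary 4.1 with $k = 2$ in order to translate the $\PST$ condition into a concrete arithmetic statement about $\mu_{j+2}-\mu_j$, and then extract its 2-adic content. Since the condition in Corollary 4.1 depends on $a$ and $b$ only through the difference $a-b$, having $\PST$ between $b+n/4$ and $b$ simultaneously for every $b\in\Z_n$ is equivalent to the single statement that there exist coprime integers $p,q$ with
\begin{equation*}
    \frac{p}{q}(\mu_{j+2}-\mu_j)+\frac{1}{2}\in \Z \qquad (j=0,1,\ldots,n-1).
\end{equation*}
The standing hypothesis that $\Gamma$ already has $\PST$ between $b+n/2$ and $b$ for all $b$ supplies, via Lemma 3.2, the constancy of $\vartheta_2(\mu_{j+1}-\mu_j)$ and in particular guarantees $\mu_{j+2}\neq \mu_j$, so the valuations $\vartheta_2(\mu_{j+2}-\mu_j)$ are meaningful natural numbers. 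Let $B_j:=\mu_{j+2}-\mu_j$ throughout.

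For the ($\Leftarrow$) direction, assume $\vartheta_2(B_j)=m'$ for every $j$ and write $B_j=2^{m'}C_j$ with $C_j$ odd. I would simply take $p=1$ and $q=2^{m'+1}$, so that $\gcd(p,q)=1$, and verify directly that
\begin{equation*}
    \frac{B_j}{2^{m'+1}}+\frac{1}{2}=\frac{C_j+1}{2}\in \Z
\end{equation*}
since $C_j$ is odd. Corollary 4.1 then delivers the desired $\PST$ between $b+n/4$ and $b$ for every $b$.

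For the ($\Rightarrow$) direction, rewrite the arithmetic condition as $2q\mid 2pB_j+q$ and mine it for its consequences. Reducing modulo $2$ forces $q$ to be even; writing $q=2^{k+1}q''$ with $q''$ odd and $k\geq 0$, coprimality of $p$ and $q$ gives $p$ odd and $\gcd(p,q'')=1$. The divisibility collapses to
\begin{equation*}
    pB_j\equiv -2^k q''\pmod{2^{k+1}q''},
\end{equation*}
and in particular $pB_j\equiv 2^k\pmod{2^{k+1}}$ (using that $-q''\equiv 1\pmod 2$). Since $p$ is odd, this pins $\vartheta_2(B_j)=k$ for every $j$, so the valuation is constant with $m'=k$.

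The main obstacle is the careful 2-adic bookkeeping in the ($\Rightarrow$) direction: extracting from the fractional identity that $q$ is necessarily even, that the level $k$ encoded by the time parameter $p/q$ is independent of $j$, and that the congruence modulo $2^{k+1}$ forces an equality $\vartheta_2(B_j)=k$ rather than a mere inequality. The argument parallels the proof of Lemma 3.2, with the difference $\mu_{j+2}-\mu_j$ replacing $\mu_{j+1}-\mu_j$ and the shift $1/2$ in the integrality condition replacing the $1/2$ induced by $a-b=n/2$; this analogy should guide the details.
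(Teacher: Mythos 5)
First, note that the paper gives no proof of this lemma at all: it is quoted from \cite[Theorem 4.3]{S22}, so there is no internal argument to compare yours against. Your ($\Rightarrow$) direction is sound: summing the conditions of \Cref{thm::3.1} for indices $j$ and $j+1$ with $a-b=n/4$ yields $\frac{p}{q}(\mu_{j+2}-\mu_j)+\frac12\in\Z$, and your $2$-adic extraction ($q$ even, $pB_j\equiv 2^k\pmod{2^{k+1}}$, hence $\vartheta_2(B_j)=k$ for every $j$) is correct; that $B_j\neq 0$ also comes for free from this condition.

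The gap is in ($\Leftarrow$). You verify only the $k=2$ condition of \Cref{cor::4.1} and then invoke that corollary's ``if'' direction to conclude $\PST$. But \Cref{cor::4.1} is obtained from \Cref{thm::3.1} merely by summing $k$ consecutive instances of \Cref{eq::011}; only its ``only if'' direction is actually justified, and the converse fails in general: knowing $\frac{p}{q}(\mu_{j+2}-\mu_j)+\frac12\in\Z$ does not recover the genuine requirement $\frac{p}{q}(\mu_{j+1}-\mu_j)+\frac14\in\Z$. Concretely, write $\mu_{j+1}-\mu_j=2^m e_j$ with $e_j$ odd, which is what the standing hypothesis supplies via \Cref{lem::3.2}. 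The condition $\frac{p}{q}\,2^m e_j+\frac14\in\Z$ forces $\vartheta_2(q)=m+2$ and $p e_j/q''\equiv 3\pmod 4$ for every $j$ (with $q''$ the odd part of $q$), i.e.\ all $e_j$ must lie in one residue class modulo $4$; this is equivalent to $\vartheta_2(\mu_{j+2}-\mu_j)=m+1$ for all $j$, not merely to constancy of that valuation. If the $e_j$ alternated between $1$ and $3$ modulo $4$ with $\vartheta_2(e_j+e_{j+1})$ constant and at least $2$, your choice $p=1$, $q=2^{m'+1}$ would pass the $k=2$ test while no $p,q$ passes the $k=1$ test. So the ($\Leftarrow$) direction must be argued through \Cref{thm::3.1} directly, using the standing hypothesis to translate $\vartheta_2(\mu_{j+2}-\mu_j)$ into a statement about the $e_j$ modulo $4$; as written, you use that hypothesis only for the side remark that $\mu_{j+2}\neq\mu_j$, which, incidentally, does not follow from it either, since two consecutive differences of equal $2$-adic valuation can cancel.
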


If there exists $\gamma_{j}=\gamma_{j+2}$ for all $j=0,\ldots,n-1$, by~\Cref{eq::002}, then $2(a-b)/n\in \Z$. If $a=b+n/4$, then $1/2\in \Z$. This implies that $\Gamma$ has no $\PST$ between vertices $b+n/4$ and $b$.

\begin{cor}\label{cor::4.2}
  Let $\Gamma=\IMCG_n(\B,\D,\sigma)$ be an integral mixed circulant graph. If there exists $\gamma_{j}=\gamma_{j+2}$ for all $j=0,\ldots,n-1$, then $\Gamma$ has no $\PST$ between vertices $b+n/4$ and $b$, therefore has no $\MST$.
\end{cor}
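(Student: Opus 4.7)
The plan is to apply \Cref{cor::4.1} with the shift $k=2$ and target pair $a=b+n/4$. Under the hypothesis $\gamma_{j+2}=\gamma_j$ for every $j\in\{0,1,\ldots,n-1\}$, the first summand $(p/q)(\gamma_{j+2}-\gamma_j)$ in \Cref{eq::012} vanishes identically. Hence the condition necessary for $\PST$ between $a$ and $b$ becomes simply
\[
\frac{2(a-b)}{n}\in\Z.
\]
Substituting $a-b=n/4$ gives $\tfrac{1}{2}\in\Z$, a contradiction. Therefore $\Gamma$ cannot admit $\PST$ between $b$ and $b+n/4$.

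To conclude the absence of $\MST$, I would invoke \Cref{thm::4.2}, which states that $\MST$ on $\{b,b+n/4,b+n/2,b+3n/4\}$ is equivalent to having $\PST$ simultaneously between $b$ and $b+n/4$ and between $b$ and $b+n/2$. Since the first of these already fails under the assumption $\gamma_{j+2}=\gamma_j$ for all $j$, no $\MST$ on the four-vertex set can occur.

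There is essentially no obstacle here: the corollary is a direct one-line specialization of \Cref{cor::4.1} combined with \Cref{thm::4.2}, and the discussion paragraph immediately preceding the corollary statement already spells out the key computation. The only point deserving mild care is ensuring that the reference in that paragraph (to \Cref{eq::002}) is understood as a typographical shorthand for \Cref{eq::012} inside \Cref{cor::4.1}, since it is the $\PST$ characterization (not the eigenvector formula) that drives the argument.
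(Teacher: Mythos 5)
Your proposal is correct and matches the paper's own argument: the text immediately preceding the corollary carries out exactly this computation (specializing the PST criterion with shift $k=2$ to $a-b=n/4$ to get $1/2\in\Z$), and the MST conclusion likewise rests on \Cref{thm::4.2}. Your remark that the citation of \Cref{eq::002} there should be read as the criterion in \Cref{cor::4.1} (\Cref{eq::012}) is a fair observation, but it does not change the substance.
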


\begin{lem}{\upshape\bfseries (See~\cite[Theorem 4.4]{S22})}\label{thm::4.3}
	Let $\Gamma=\IMCG_n(\B,\D,\sigma)$ be an integral mixed circulant graph. Then for all $b \in \Z_n$, $\Gamma$ has $\MST$ between vertices $b$, $b+n/4$, $b+n/2$, $b+3n/4$ if and only if
	\begin{equation*}
		\vartheta_2(\mu_{j+1}-\mu_j)=1 \text{~and~} \vartheta_2(\mu_{j+2}-\mu_j)=2,
	\end{equation*}
	for all $j=0,1,\ldots, n-1$.
\end{lem}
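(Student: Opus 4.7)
The plan is to reduce the claim to three previously proved lemmas and then identify the two numerical constants that appear. First, \Cref{thm::4.2} says that $\Gamma$ has $\MST$ on $\{b, b+n/4, b+n/2, b+3n/4\}$ for every $b \in \Z_n$ if and only if it has $\PST$ between $b+n/2$ and $b$ and between $b+n/4$ and $b$, again for every $b$. By \Cref{lem::3.2} the first of these is equivalent to the existence of some $m \in \N$ with $\vartheta_2(\mu_{j+1}-\mu_j) = m$ for all $j$, and \Cref{thm::4.4} (applied once the first $\PST$ is in force) says the second is equivalent to the existence of some $m' \in \N$ with $\vartheta_2(\mu_{j+2}-\mu_j) = m'$ for all $j$. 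Thus the content of the statement is that the only admissible values are $m=1$ and $m'=2$.

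For the forward direction I would invoke \Cref{thm::1.3} to extract the structural conditions forced by $\MST$: $n \in 8\N$, $\B_0 = 2\B^*_1 = 4\B^*_2 = 8\B^*_3$, $\D_2 = \{n/4\}$, $\D_3 = \{n/8\}$, $n/2 \notin \B$. These hypotheses put us inside Case \textbf{(i)} of \Cref{thm::1.2}, and the four-branch formula for $\mu_{j+1}-\mu_j$ in \Cref{lem::3.13} then specializes, because $n/2 \notin \B$, to expressions of shape $\pm 2\sigma(n/4) + 4(\cdot)$, each of $2$-adic valuation exactly $1$; this pins $m=1$. For $m'$ I would telescope $\mu_{j+2}-\mu_j = (\mu_{j+2}-\mu_{j+1})+(\mu_{j+1}-\mu_j)$, which is then automatically of the form $4\cdot(\cdot)$, and run a short case split on $j \bmod 4$ using the remaining constraints $\B^*_2 = 2\B^*_3$ and $\D_3=\{n/8\}$ from \Cref{thm::1.3}, together with \Cref{lem::3.3} to control the parities of the building blocks $\Delta(j)$, $\Lambda_2(j)$, $\Lambda_3(j)$ appearing in \Cref{lem::3.14}; this shows the inner factor is odd in every residue class, giving $m' = 2$.

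The backward direction is purely formal. If $\vartheta_2(\mu_{j+1}-\mu_j) = 1$ for all $j$, this is the hypothesis of \Cref{lem::3.2} with $m=1$, giving $\PST$ between $b+n/2$ and $b$. Granted this, if $\vartheta_2(\mu_{j+2}-\mu_j) = 2$ for all $j$, this is the hypothesis of \Cref{thm::4.4} with $m'=2$, giving $\PST$ between $b+n/4$ and $b$. Applying \Cref{thm::4.2} assembles these into $\MST$ on $\{b, b+n/4, b+n/2, b+3n/4\}$. The main obstacle is the parity bookkeeping in the forward direction: telescoping automatically guarantees $\vartheta_2(\mu_{j+2}-\mu_j) \geq 2$ once $m=1$, but showing the valuation is \emph{exactly} $2$ in every residue class of $j$ modulo $4$ requires a careful parity argument using the explicit formulas of \Cref{lem::3.13} and \Cref{lem::3.14}, since if the valuation jumped to $\geq 3$ in even a single class then the constant $m'$ demanded by \Cref{thm::4.4} would fail to exist and $\MST$ would not occur.
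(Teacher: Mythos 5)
The paper gives no proof of this lemma at all: it is imported verbatim from~\cite[Theorem~4.4]{S22}, so there is no in-paper argument to compare against, only the surrounding machinery. Your reduction --- \Cref{thm::4.2} to split $\MST$ into the two $\PST$ statements, then \Cref{lem::3.2} and \Cref{thm::4.4} to convert each into a constant-valuation condition, so that the whole content is pinning $m=1$ and $m'=2$ --- is exactly the right skeleton, and your backward direction is complete as written. The one point to be careful about is your use of \Cref{thm::1.3} in the forward direction. Within this paper it happens to be non-circular, because the proof of \Cref{thm::1.3} never cites the present lemma (it works through \Cref{lem::3.11}, \Cref{thm::4.2}, \Cref{lem::3.15}, \Cref{cor::4.2}, \Cref{thm::1.2} and \Cref{thm::4.4} directly); but if this lemma is meant to be a tool \emph{for} proving \Cref{thm::1.3}, as its placement and the structure of~\cite{S22} suggest, your argument would be circular. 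A self-contained forward direction avoids \Cref{thm::1.3} entirely: from $\PST$ between $b+n/2$ and $b$ one gets the hypotheses of \Cref{lem::3.11}, hence \Cref{lem::3.15} applies and gives $\gamma_{j+2}-\gamma_j=\pm 4\,\mathds{1}_{\D}(n/4)\sigma(n/4)$ for odd $j$; the existence of a finite $m'$ forces this to be nonzero, so $\D_2=\{n/4\}$ and $m'=2$, and then \Cref{thm::1.2} puts you in its case \textbf{(i)} with $n/2\notin\B$, whence \Cref{lem::3.13} gives $\gamma_{j+1}-\gamma_j=\pm 2\sigma(n/4)+4(\cdot)$ and $m=1$. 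This is essentially the same parity bookkeeping you describe, just sourced from the lemmas rather than from the main theorem, and it is the route I would recommend writing down.
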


\begin{lem}\label{lem::3.15}
	Let $\Gamma=\IMCG_n(\B,\D,\sigma)$ be an integral mixed circulant graph. If $n\in 4\N$, $\D_2\setminus\{n/4\}=\emptyset$, $\B_0=2\B^*_1=4\B^*_2$, where $\B^*_2=\B_2\setminus \{n/4\}$ and $\B^*_1=\B_1\setminus \{n/2\}$, then we have
\begin{equation}\label{eq::28}
    \gamma_{j+2}-\gamma_j=
    \begin{cases}
    	-4\mathds{1}_{\B}(n/4)+4 \Lambda_2(j+2)-4\Delta(j), & \text{if $j\in 4\N$,}   \\
    	4\mathds{1}_{\D}(n/4)\sigma(n/4),                   & \text{if $j\in 4\N+1$,} \\
    	4\mathds{1}_{\B}(n/4)+4 \Lambda_2(j)-4\Delta(j+2),  & \text{if $j\in 4\N+2$,} \\
    	-4\mathds{1}_{\D}(n/4)\sigma(n/4),                  & \text{if $j\in 4\N+3$.}
    \end{cases}
\end{equation}
where $\Delta(j)=\sum\limits_{d\in \B_2\setminus\{n/4\}} c_{n/(4d)}(j')+\sum\limits_{d\in \B_3} {(-1)}^{j/4}c_{n/(8d)}(j')+2\Lambda_3(j)$ for $j'=j/2^{\vartheta_2(j)}$.
\end{lem}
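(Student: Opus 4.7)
The plan is to obtain the formulas for $\gamma_{j+2}-\gamma_j$ by direct substitution from \Cref{lem::3.14}, since the hypotheses of \Cref{lem::3.15} match those of \Cref{lem::3.14} exactly. Under these hypotheses, \Cref{lem::3.14} supplies a closed-form for $\gamma_j$ in each residue class of $j$ modulo $4$: on $4\N$ the eigenvalues are controlled by $\Delta(j)$, on $4\N+2$ by $\Lambda_2(j)$, and on the two odd residue classes $\gamma_j$ is just a constant depending on $\mathds{1}_\B(n/2)$ and $\mathds{1}_\D(n/4)\sigma(n/4)$. The first step is simply to record these four formulas.

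Next I would split into four cases according to $j\bmod 4$, observing that adding $2$ to $j$ exchanges the classes $4\N\leftrightarrow 4\N+2$ and $4\N+1\leftrightarrow 4\N+3$. In each case the computation is a one-line subtraction of two of the formulas from \Cref{lem::3.14}. Concretely:
\begin{itemize}
\item For $j\in 4\N$, one subtracts the $4\N$-formula from the $4\N+2$-formula (applied at $j+2$); the $\mathds{1}_\B(n/2)$ terms cancel and the $\mathds{1}_\B(n/4)$ terms combine to give $-4\mathds{1}_\B(n/4)$, leaving $4\Lambda_2(j+2)-4\Delta(j)$.
\item For $j\in 4\N+2$, one subtracts in the reverse direction, yielding the companion expression with $+4\mathds{1}_\B(n/4)$ and the $\Lambda_2$, $\Delta$ terms swapped in sign.
\item For odd $j$, the $\mathds{1}_\B(n/2)$ terms cancel and the $\pm 2\mathds{1}_\D(n/4)\sigma(n/4)$ terms combine to produce $\pm 4\mathds{1}_\D(n/4)\sigma(n/4)$, with the sign determined by whether $j\in 4\N+1$ or $j\in 4\N+3$.
\end{itemize}

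Since all four cases are purely mechanical bookkeeping, I do not anticipate any obstacle: no parity analysis, no property of Ramanujan's sum, and no new identity is needed beyond what has already been established in \Cref{lem::3.14}. The only care required is to track the residue class of $j+2$ correctly and to preserve signs when the "large" expressions ($\Delta$ and $\Lambda_2$) are moved across the subtraction.
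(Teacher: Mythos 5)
Your proposal is correct and is exactly the route the paper intends: Lemma~\ref{lem::3.15} is, like Lemma~\ref{lem::3.13}, a case-by-case subtraction of the four closed forms of $\gamma_j$ supplied by \Cref{lem::3.14}, with $j+2$ landing in the companion residue class modulo $4$. One remark: for $j\in 4\N+2$ the honest subtraction gives $\gamma_{j+2}-\gamma_j=4\mathds{1}_{\B}(n/4)+4\Delta(j+2)-4\Lambda_2(j)$, i.e.\ with the $\Lambda_2$ and $\Delta$ terms carrying the opposite signs to those printed in \Cref{eq::28}; this is a sign slip in the stated formula (harmless downstream, since only $\vartheta_2(\gamma_{j+2}-\gamma_j)$ is used), and your phrase ``terms swapped in sign'' actually matches the correct computation rather than the printed one.
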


\begin{proof}[\upshape\bfseries Proof of \Cref{thm::1.3}]
$(\Rightarrow)$ Assume that $\Gamma$ has $\MST$ between vertices $b$, $b+n/4$, $b+n/2$, $b+3n/4$ for all $b \in \Z_n$. According to~\Cref{lem::3.11},~\ref{thm::4.2},~\ref{lem::3.15}, and~\Cref{cor::4.2}, we have $\mathds{1}_{\D}(n/4)\sigma(n/4)\neq 0$. This implies $\D_2=\{n/4\}$. By~\Cref{thm::1.2}, we have $n/2\notin \B$. Hence, \Cref{eq::28} equal to

\begin{equation}\label{eq::29}
    \gamma_{j+2}-\gamma_j=
    \begin{cases}
    	4 \Lambda_2(j+2)-4\Delta(j), & \text{if $j\in 4\N$,}   \\
    	4\sigma(n/4),                & \text{if $j\in 4\N+1$,} \\
    	4 \Lambda_2(j)-4\Delta(j+2), & \text{if $j\in 4\N+2$,} \\
    	-4\sigma(n/4),               & \text{if $j\in 4\N+3$.}
    \end{cases}
\end{equation}

By~\Cref{thm::4.4} and \Cref{eq::29}, we have $\Lambda_2(j+2)-\Delta(j)\in 2\N+1$ for $j\in 4\N$. By~\Cref{lem::3.3} and~\Cref{lem::3.8}, we can easily obtain that  $\D_3=\{n/8\}$ and  $\B_2\setminus\{n/4\}=2\B_3\setminus\{n/8\}$.

$(\Leftarrow)$ According to $\B_0=2\B^*_1=4\B^*_2$, where $\B^*_2=\B_2\setminus \{n/4\}$ and $\B^*_1=\B_1\setminus \{n/2\}$, $\D_2=\{n/4\}$, and $n/2\notin \B$, by~\Cref{thm::1.2}, we obtain that $\Gamma$ has $\PST$ between vertices $b+n/2$ and $b$ for all $b \in \Z_n$. Since $\D_3=\{n/8\}$ and $\B_2\setminus\{n/4\}=2\B_3\setminus\{n/8\}$, we can easily obtain that $\Lambda_2(j+2)-4\Delta(j) \in 2\N+1$ for $j\in 4\N$ and $\Lambda_2(j)-4\Delta(j+2) \in 2\N+1$ for $j\in 4\N+2$. By~\Cref{eq::29}, we have $\vartheta_2(\gamma_{j+2}-\gamma_j)=2$, for all $j=0,1,\ldots, n-1$. By~\Cref{thm::4.4} and~\ref{thm::4.2}, we can obtain that $\Gamma$ has $\MST$ between vertices $b$, $b+n/4$, $b+n/2$, $b+3n/4$ for all $b \in \Z_n$.
\end{proof}

From~\Cref{thm::1.3}, we can see that $\MST$ on $\Gamma$ only occurs between four vertices. Therefore, we obtain the following corollary.

\begin{cor}
	Let $\Gamma=\IMCG_n(\B,\D,\sigma)$ be an integral mixed circulant graph. Then $\Gamma$ has no $\UST$.
\end{cor}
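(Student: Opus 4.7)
The plan is to combine the lemma stated just before~\Cref{thm::4.2} (which asserts that any $\PST$ pair $a,b$ in an integral mixed circulant graph must satisfy $a - b \equiv kn/4 \pmod n$ for some $k \in \{1,2,3\}$) with~\Cref{thm::1.3}, and then sweep the small values of $n$ separately. Recall that $\UST$ requires $\PST$ between \emph{every} pair of distinct vertices of $\Gamma$, so the strategy is to show that the structural restrictions on where $\PST$ can occur are incompatible with ``everywhere''.

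First I would handle the generic case $n \geq 5$. Assume toward contradiction that $\Gamma$ admits $\UST$. Then in particular $\Gamma$ has $\PST$ between vertices $0$ and $1$, and by the aforementioned lemma this forces $1 \equiv kn/4 \pmod n$ for some $k \in \{1,2,3\}$. For $n \geq 5$ the candidate values $kn/4 \in \{n/4, n/2, 3n/4\}$ all lie strictly between $1$ and $n$ while being at least $n/4 \geq 5/4 > 1$, so no such $k$ exists and we obtain a contradiction.

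Next I would dispose of the exceptional small cases. When $n \in \{1,2,3\}$ we have $n \not\equiv 0 \pmod 4$, so~\Cref{thm::1.2} (together with the corresponding fact that $\PST$ in an integral mixed circulant graph always occurs between vertices differing by $kn/4$) forbids any $\PST$ at all, so $\UST$ cannot hold as soon as there are at least two vertices. When $n = 4$, the quadruple $\{b, b+n/4, b+n/2, b+3n/4\}$ equals $\Z_4$, so $\UST$ would coincide with the $\MST$ characterized in~\Cref{thm::1.3}; but that theorem requires $n \in 8\N$, ruling out $n = 4$.

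The main obstacle is only bookkeeping: the real work is already done by the cited lemma and by~\Cref{thm::1.3}, and once one accepts that $\PST$ can exist only at the three ``quarter distances'' $n/4, n/2, 3n/4$, while $\UST$ would demand PST at every distance, the impossibility is immediate for $n \geq 5$ and handled by invoking~\Cref{thm::1.2} and~\Cref{thm::1.3} for $n \leq 4$.
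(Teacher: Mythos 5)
Your proposal is correct and rests on essentially the same idea as the paper: by the lemma preceding \Cref{thm::4.2} and by \Cref{thm::1.3}, $\PST$ (and hence $\MST$) can only occur among the quarter-distance vertices $b$, $b+n/4$, $b+n/2$, $b+3n/4$, so $\PST$ between all pairs of vertices is impossible. Your write-up is in fact a bit more careful than the paper's one-sentence deduction, since you explicitly settle the case $n=4$ (where that quadruple is the whole vertex set) by invoking the requirement $n\in 8\N$ in \Cref{thm::1.3}, as well as the cases $n\le 3$.
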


\begin{ex}
We give an example of the existence of $\MST$ in~\Cref{thm::1.3}, and shown in Fig.~\ref{fig::3}. Let $\Gamma=\IMCG_n(\B,\D,\sigma)$ be an integral mixed circulant graph. If $G(\mathbb{Z}_{16},\C)=\IMCG_{16}(\{1\},\{2,4\},\{1,-1\})$, then $\C=G_{16}(1)\cup G_{16}^{1}(2)\cup G_{16}^{3}(4)=\{1,2,3,5,7,9,10,11,12,13,15\}$ with $\overline{\C}=\{2,10,12\}$.
\end{ex}

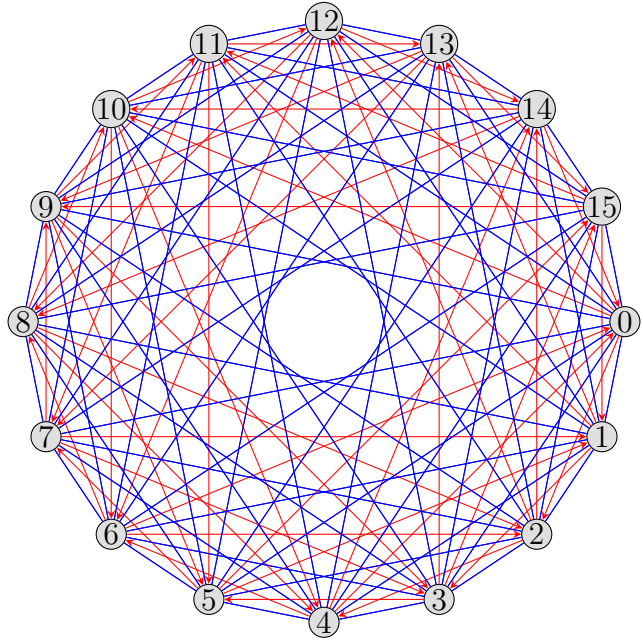
\begin{figure}[htbp!]
  \centering
\begin{tikzpicture}
\foreach \x in{0,1,...,15}
{
\node[circle,fill=gray!25,draw,inner sep=0pt,minimum size=4mm] (a\x) at (-\x*22.5:4){\x};
}
\foreach \x in {0,1,...,15}
{
\pgfmathparse{int(mod(\x+2,16))}
\draw[-stealth,red] (a\x) -- (a\pgfmathresult);
}
\foreach \x in {0,1,...,15}
{
\pgfmathparse{int(mod(\x+10,16))}
\draw[-stealth,red] (a\x) -- (a\pgfmathresult);
}
\foreach \x in {0,1,...,15}
{
\pgfmathparse{int(mod(\x+12,16))}
\draw[-stealth,red] (a\x) -- (a\pgfmathresult);
}
\foreach \x in {0,1,...,15}
{
\foreach[parse] \y in{\x+1,\x+3,\x+5,\x+7,\x+9,\x+11,\x+13,\x+15}
{
\pgfmathparse{int(mod(\y,16))}
\draw[blue] (a\x) -- (a\pgfmathresult);
}
}
\end{tikzpicture}
  \caption{Integral mixed circulant graph $\IMCG_{16}(\{1\},\{2,4\},\{1,-1\})$ having $\MST$ \label{fig::3}}
\end{figure}

\section{Conclusion}
This work focuses on the study of existence for $\PST$ and $\MST$ on integral mixed circulant graphs using Hermitian adjacency matrix. We give a characterization the existence of $\PST$ and $\MST$ on integral mixed circulant graphs. We can find that properly adding oriented edges can help construct graphs with $\PST$ in undirected graphs. For example, integral mixed circulant graphs $G(\mathbb{Z}_{8},\C)=\IMCG_8(\{2,4\},\{1\},\{-1\})$ shown in~\Cref{fig::2.3}, we can see that complete graphs $K_{8}$ have no $\PST$, but $\PST$ is generated when some edges are transformed into directed edges. It can be seen that the mapping $\sigma$ has no effect on the result from the proof of~\Cref{thm::1.2} and \ref{thm::1.3}. At this point, a general question that we might ask is what does the directed edge affect $\PST$ and $\MST$ for a graph using Hermitian adjacency matrix?



\end{document}